\newtheorem{theorem}{Theorem}
\newtheorem{lemma}[theorem]{Lemma}
\newtheorem{proposition}[theorem]{Proposition}
\newtheorem{definition}[theorem]{Definition}
\newtheorem{example}[theorem]{Example}
\crefname{theorem}{Theorem}{Theorems}
\Crefname{lemma}{Lemma}{Lemmas}
\Crefname{proposition}{Proposition}{Propositions}
\Crefname{claim}{Claim}{Claims}
\Crefname{fact}{Fact}{Facts}
\Crefname{remark}{Remark}{Remarks}
\Crefname{observation}{Observation}{Observations}
\Crefname{line}{Line}{Lines}
\Crefname{algocf}{Algorithm}{Algorithms}
\definecolor{darkblue}{rgb}{0,0,0.38}
\definecolor{darkred}{rgb}{0.8,0,0}
\definecolor{darkgreen}{rgb}{0.1,0.35,0}
\renewcommand{\epsilon}{\varepsilon}
\newcommand{\Cscr}{\mathcal{C}}
\newcommand{\Fscr}{\mathcal{F}}
\newcommand{\Tscr}{\mathcal{T}}
\newcommand{\Uscr}{\mathcal{U}}
\newcommand{\LP}{\mathrm{LP}}
\newcommand{\exc}{\theta}
\title{Cost Allocation for Set Covering: the Happy Nucleolus}
\date{\small Research Institute for Discrete Mathematics and Hausdorff Center for Mathematics, University of Bonn \\
\scriptsize \texttt{\{blauth,ellerbrock,traub,vygen\}@dm.uni-bonn.de}}
  \author{
    Jannis Blauth
    \and
    Antonia Ellerbrock
    \and
    Vera Traub
    \and
    Jens Vygen
    }
\begin{document}

\maketitle

\begin{abstract}
We consider cost allocation for set covering problems. We allocate as much cost to the elements (players) as possible
without violating the group rationality condition (no subset of players pays more than covering this subset would cost), 
and so that the excess vector is lexicographically maximized.
This is identical to the well-known nucleolus if the core of the corresponding cooperative game is nonempty, 
i.e., if some optimum fractional cover is integral.
In general, we call this the \emph{happy nucleolus}.

Like for the nucleolus, the excess vector contains an entry for every subset of players, not only for the sets in
the given set covering instance. 
Moreover, it is NP-hard to compute a single entry because this requires solving a set covering problem.
Nevertheless, we give an explicit family of at most $mn$ subsets, each with a trivial cover (by a single set), 
such that the happy nucleolus is always completely determined by this proxy excess vector;
here $m$ and $n$ denote the number of sets and the number of players in our set covering instance. 
We show that this is the unique minimal such family in a natural sense.

While computing the nucleolus for set covering is NP-hard, our results imply that the happy nucleolus can be computed in polynomial time.\\
\end{abstract}

\section{Introduction}

Cost allocation has been studied in various settings; we 
consider it in the context of the classical set covering problem. Given a finite set $P$ (of players) and a family $\Tscr$ of subsets of $P$, 
with nonnegative costs $c(T)$ for $T\in\Tscr$, 
we ask for a minimum-cost sub-family $\Uscr\subseteq\Tscr$ such that $\Uscr$ \emph{covers} $P$, 
i.e., every element of $P$ belongs to some set in $\Uscr$.
We may assume that $\Tscr$ covers $P$, i.e., the union of all sets in $\Tscr$ is $P$.
The problem of cost allocation now asks to allocate a certain amount $y_p\ge 0$ of the total cost to each player $p\in P$, subject to certain efficiency and fairness conditions.

Different criteria of fairness can be defined.
A nonempty subset of players is called a \emph{coalition}.
Call a coalition $S$ \emph{happy} if $y(S):=\sum_{p\in S}y_p$ is at most the cost of covering $S$, i.e.,
the \emph{excess} $\min\{c(\Uscr): \Uscr\subseteq\Tscr,\, \Uscr \text{ covers } S\} - y(S)$ is nonnegative, where $c(\Uscr) \coloneqq  \sum_{T \in \Uscr} c(T)$ for $\Uscr \subseteq \Tscr$.
The \emph{group rationality} condition requires all coalitions to be happy.
\emph{Individual rationality} means group rationality for all singletons.

The set of cost allocations that satisfy group rationality and allocate the cost of an optimum set cover is called the \emph{core} \cite{gillies1959solutions}.
However, the core is often empty for set covering instances \cite{tamir1989core, potters1992traveling}. Then, one can either violate group rationality 
or allocate less. 
The maximum total amount that can be allocated to the players subject to group rationality is the
minimum cost of a \emph{fractional} set cover (\cite{deng1999algorithmic}, \cref{prop:lp_allocated}). 

Caprara and Letchford \cite{caprara2010new} mentioned applications of such cost allocations. In one example, a public service is provided,
and as large a part as possible of the total cost shall be distributed to the consumers such that no coalition pays more than the service just for this coalition would cost. 
In other situations, all the cost must be distributed to the consumers; then, one can scale up the cost allocation by a factor $\gamma\ge 1$, 
which may still be acceptable if $\gamma$ is not too big because no coalition pays more than $\gamma$ times the amount 
that the service just for this coalition would cost. 

There can be many different cost allocations that maximize the allocated cost subject to group rationality. 
Even players that are completely symmetric can receive different values, which is often unacceptable in practice.
Consider the example in \cref{fig:simpleexample}, which shows a simple instance of a vehicle routing problem.
This can be viewed as a set covering problem where every possible tour corresponds to the set of customers it serves. Vehicle routing problems will serve as an example for us several times.

\begin{figure}
\begin{center}
        \begin{tikzpicture}[scale=1.5, thick]
        \tikzstyle{customer}=[circle, draw, scale=.7, minimum size=22pt, inner sep=1]
        \begin{scope}[xshift=0]
            \node [scale=1.5, rectangle, draw, fill=black] (D) at (0,0) {};
            \node [customer] (c) at (1,0) {\Large c};
            \node [customer] (d) at (2,0) {\Large d};
            \node [customer] (e) at (3,0) {\Large e};
            \node [customer] (a) at (-0.8,0.6) {\Large a};
            \node [customer] (b) at (-0.8,-0.6) {\Large b};
            \node[below=2mm] at (c) {\bf 2};
            \node[below=2mm] at (d) {\bf 0};
            \node[below=2mm] at (e) {\bf 4};
            \node[left=2mm] at (a) {\bf 2};
            \node[left=2mm] at (b) {\bf 1};
            \draw[color=blue] (D) -- (c) node[midway,above] {1};
            \draw[color=blue] (c) -- (d) node[midway,above] {1};
            \draw[color=blue] (d) -- (e) node[midway,above] {1};
            \draw[color=blue] (D) -- (a) node[midway,above right=-1mm] {1};
            \draw[color=blue] (a) -- (b) node[midway,left] {1};
            \draw[color=blue] (b) -- (D) node[midway,below right=-1mm] {1};
            \end{scope}
            \begin{scope}[xshift=5.5cm]
            \node [scale=1.5, rectangle, draw, fill=black] (D) at (0,0) {};
            \node [customer] (c) at (1,0) {\Large c};
            \node [customer] (d) at (2,0) {\Large d};
            \node [customer] (e) at (3,0) {\Large e};
            \node [customer] (a) at (-0.8,0.6) {\Large a};
            \node [customer] (b) at (-0.8,-0.6) {\Large b};
            \node[below=2mm] at (c) {\bf 1};
            \node[below=2mm] at (d) {\bf 1.5};
            \node[below=2mm] at (e) {\bf 3.5};
            \node[left=2mm] at (a) {\bf 1.5};
            \node[left=2mm] at (b) {\bf 1.5};
            \draw[color=blue] (D) -- (c) node[midway,above] {1};
            \draw[color=blue] (c) -- (d) node[midway,above] {1};
            \draw[color=blue] (d) -- (e) node[midway,above] {1};
            \draw[color=blue] (D) -- (a) node[midway,above right=-1mm] {1};
            \draw[color=blue] (a) -- (b) node[midway,left] {1};
            \draw[color=blue] (b) -- (D) node[midway,below right=-1mm] {1};
            \end{scope}
        \end{tikzpicture}
\end{center}
\caption{\small This simple vehicle routing instance asks for a set of tours that visit all customers a, b, c, d, e; 
here a \emph{tour} is a cycle that starts and ends at the depot, the black square. 
Distances are given by the metric closure of the numbers on depicted edges.
The optimum vehicle routing solution costs 9, and in this example we can allocate 9 units and satisfy group rationality.
Two possible such cost allocations are shown in bold next to the vertices. 
The one on the left is not even symmetric: although a and b are completely symmetric, they pay different shares.
Moreover, customer c pays more than customer d although it is definitely not harder to serve.
The allocation on the right looks fairer, and this is in fact the (happy) nucleolus. 
\label{fig:simpleexample}}
\end{figure}
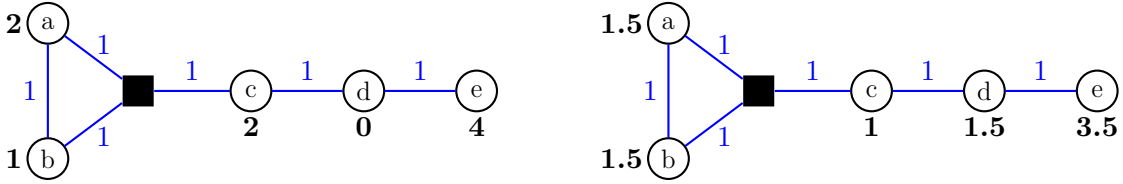

\subsection{The happy nucleolus} \label{subsec:happy_nuc}

To obtain a fair cost allocation, Schmeidler defined the \emph{nucleolus} \cite{schmeidler1969nucleolus}.
It is the unique cost allocation that distributes the entire cost, fulfills individual rationality, and lexicographically maximizes the \emph{excess vector};
this vector contains the excess of every coalition, and these entries are sorted in nondecreasing order.\footnote{
For set covering instances, we could drop the individual rationality condition because it is automatically satisfied. This follows from 5.(1) in \cite{potters1992nucleolus} because the minimum cost to cover a coalition is subadditive. 
}

\begin{definition}[excess vector]
Let $(P,\Tscr,c)$ be a set covering instance. 
Let $\Cscr$ denote the set of pairs $(S,\Uscr)$ such that $\emptyset\not=S\subseteq P$, $\Uscr\subseteq\Tscr$, and $\Uscr$ covers $S$.
Let $y\in\mathbb{R}_{\ge 0}^P$. 
The excess of $(S,\Uscr)\in\Cscr$ is
\begin{equation*}
    \exc^y(S,\Uscr) \ := \ c(\Uscr)-y(S) \enspace .
\end{equation*}
The excess of a coalition $S$ is defined as 
\begin{equation*}
    \exc^y(S) \ := \ \min \left\{\big. \exc^y(S,\Uscr):\Uscr\subseteq\Tscr,\, \Uscr \text{ covers } S \right\} \enspace .
\end{equation*}
The \emph{excess vector} $\exc^y$ contains the entries $\exc^y(S)$ for all coalitions $S$ in nondecreasing order.
\end{definition}

By definition, a vector $y\in\mathbb{R}_{\ge 0}^P$ satisfies group rationality if and only if its excess vector is nonnegative.
The nucleolus satisfies group rationality if and only if the core is nonempty. 
The right-hand side of \cref{fig:simpleexample} shows an example.

To obtain a fair cost allocation that satisfies group rationality even when the core is empty, we introduce the concept of the
happy nucleolus. Recall that we want to allocate as much as possible subject to group rationality; this amount 
is the minimum cost of a fractional set cover:
\begin{align*}
\LP(P,\Tscr,c) \ := \
\min\, \left\{\sum_{T\in\Tscr} c(T) x_T \  : \ x\in\mathbb{R}_{\ge 0}^{\Tscr}, \ \sum_{T\in\Tscr : \, p\in T} x_T \ge 1 \text{ for all } p\in P \right\} \enspace .
\end{align*}

\begin{proposition}[\cite{deng1999algorithmic}] \label{prop:lp_allocated}
$\LP(P,\Tscr,c) =  \max \left\{ y(P) \ : \ y\in\mathbb{R}_{\ge 0}^P \text{ satisfies group rationality} \right\}$. 
\end{proposition}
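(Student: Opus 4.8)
The plan is to prove the equality by establishing the two inequalities separately, and the natural tool is LP duality.

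First I would write down the linear program $\LP(P,\Tscr,c)$ explicitly as a covering LP: minimize $\sum_{T\in\Tscr} c(T) x_T$ subject to $\sum_{T\ni p} x_T \ge 1$ for all $p\in P$ and $x\ge 0$. Its dual is a packing LP with a variable $y_p$ for every player $p\in P$, namely
\begin{align*}
\max\, \Bigl\{\, \textstyle\sum_{p\in P} y_p \ : \ y\in\mathbb{R}_{\ge 0}^P, \ \sum_{p\in T} y_p \le c(T) \text{ for all } T\in\Tscr \,\Bigr\}.
\end{align*}
By strong LP duality, the optimum value of this dual equals $\LP(P,\Tscr,c)$. So it suffices to show that the feasible region of the dual coincides with the set of $y\in\mathbb{R}_{\ge 0}^P$ satisfying group rationality, since then the dual objective $\sum_p y_p = y(P)$ maximized over that region is exactly the right-hand side of the claim.

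The heart of the argument is therefore the claim that, for $y\in\mathbb{R}_{\ge 0}^P$, group rationality is equivalent to the dual constraints $y(T)\le c(T)$ for all $T\in\Tscr$. One direction is immediate: if $y$ satisfies group rationality, then for any single set $T\in\Tscr$ the coalition $S=T$ (assuming $T\neq\emptyset$; empty sets carry no constraint) is covered by the single-set family $\{T\}$, so $\exc^y(S)\ge 0$ forces $y(T)\le c(\{T\})=c(T)$. For the converse, suppose $y(T)\le c(T)$ for all $T\in\Tscr$, and let $S$ be any coalition with any covering family $\Uscr\subseteq\Tscr$. Then summing the constraints over $\Uscr$ gives
\begin{align*}
c(\Uscr)=\sum_{T\in\Uscr} c(T) \ \ge\ \sum_{T\in\Uscr} y(T) \ \ge\ y\Bigl(\textstyle\bigcup_{T\in\Uscr}T\Bigr) \ \ge\ y(S),
\end{align*}
where the second inequality uses $y\ge 0$ together with the fact that each $p\in\bigcup_{T\in\Uscr}T$ is counted at least once in $\sum_T y(T)$, and the last inequality uses $S\subseteq\bigcup_{T\in\Uscr}T$ (since $\Uscr$ covers $S$) and again $y\ge 0$. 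Hence $\exc^y(S,\Uscr)\ge 0$ for every covering pair, so $\exc^y(S)\ge 0$ and group rationality holds.

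I expect the main subtlety to be purely bookkeeping rather than a genuine obstacle: one must check that the nonnegativity $y\ge 0$ is used correctly in passing from $\sum_{T\in\Uscr} y(T)$ to $y(S)$ (a set may be covered multiple times, and the covering union may strictly contain $S$), and that both LPs are feasible and bounded so that strong duality applies without edge cases. Feasibility of the primal follows from the standing assumption that $\Tscr$ covers $P$ (take $x_T=1$ for all $T$), and the primal is bounded below by $0$ since $c\ge 0$; correspondingly the dual is feasible (take $y=0$) and bounded, so strong duality holds and both optima are finite and equal. This closes the proof.
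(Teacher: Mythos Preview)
Your proposal is correct and follows essentially the same approach as the paper: apply LP duality to obtain the packing dual $\max\{y(P):y\ge 0,\ y(T)\le c(T)\ \forall T\in\Tscr\}$, and then argue that these single-set constraints are equivalent to full group rationality via the summation $y(S)\le\sum_{U\in\Uscr}y(U)\le c(\Uscr)$. The paper presents this a bit more tersely (it does not spell out strong-duality feasibility/boundedness), but the logical content is identical.
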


\begin{proof} 
We observe
\begin{align*}
\LP(P,\Tscr,c) \ = \ & \max\, \left\{ y(P) \ : \ y\in\mathbb{R}_{\ge 0}^P, \ y(T)\le c(T) \text{ for all } T\in\Tscr \right\} \\
\ = \ & \max\, \left\{ y(P) \ : \ y\in\mathbb{R}_{\ge 0}^P, \ y(S)\le c(\Uscr) \text{ for all } S\subseteq P \text{ and all } \Uscr\subseteq\Tscr \text{ covering } S \right\} .
\end{align*}
Here, the first equation is LP duality. For the second equation, ``$\ge$'' is trivial. 
To show ``$\le$'', let $y\in\mathbb{R}_{\ge 0}^P$ with $y(T)\le c(T)$ for all $T\in\Tscr$. Then,
\begin{equation*}
    y(S) \ \le \ \sum_{U\in\Uscr} y(U) \ \le \ \sum_{U\in\Uscr} c(U) \ = \ c(\Uscr)
\end{equation*}
for all $S\subseteq P$ and $ \Uscr\subseteq\Tscr$ covering $S$.
\end{proof} 

Now we can define:

\begin{definition}[happy nucleolus]
Let $(P,\Tscr,c)$ be a set covering instance.
A vector $y$ is called \emph{happy nucleolus} of $(P,\Tscr, c)$ if it lexicographically maximizes the excess vector $\exc^y$
among all vectors $y\in\mathbb{R}_{\ge 0}^P$ with $y(P)=\LP(P,\Tscr,c)$.
\end{definition}

\begin{figure}
\begin{center}
    \begin{tikzpicture}[scale=1.45, thick]
     \tikzstyle{customer}=[circle, draw, scale=.7, minimum size=22pt, inner sep=1]
        \begin{scope}[xshift=0]
            \node[scale=1.5, rectangle, draw, fill=black] (D) at (-.1,0) {};
            \node[customer] (a) at (-1.7,0.5) {\Large a};
            \node[customer] (b) at (-1,0) {\Large b};
            \node[customer] (c) at (-1.7,-0.5) {\Large c};
            \node[customer] (d) at (.7,0) {\Large d};
            \draw[color=blue] (D) -- (b) node[midway,above] {4};
            \draw[color=blue] (D) -- (d) node[midway,above] {3};
            \draw[color=blue] (a) -- (b) node[near end,above] {0};
            \draw[color=blue] (a) -- (c) node[midway,left] {0};
            \draw[color=blue] (b) -- (c) node[near start, below] {0};
            \path[color=blue, bend left=50] (b) edge (d) node[midway, above=.4cm] {\scriptsize 6};
            \node[left=2mm] at (a) {\bf 5};
            \node[right=3mm, below=1mm] at (b) {\bf 5};
            \node[left=2mm] at (c) {\bf 5};
            \node[right=2mm] at (d) {\bf 6};
        \end{scope}
        \begin{scope}[xshift=4cm]
            \node [scale=1.5, rectangle, draw, fill=black] (D) at (-0.1,0) {};
            \node [customer] (a) at (-1.7,0.5) {\Large a};
            \node [customer] (b) at (-1,0) {\Large b};
            \node [customer] (c) at (-1.7,-0.5) {\Large c};
            \node [customer] (d) at (.7,0) {\Large d};
            \draw[color=blue] (D) -- (b) node[midway,above] {4};
            \draw[color=blue] (D) -- (d) node[midway,above] {3};
            \draw[color=blue] (a) -- (b) node[near end,above] {0};
            \draw[color=blue] (a) -- (c) node[midway,left] {0};
            \draw[color=blue] (b) -- (c) node[near start, below] {0};
            \path[color=blue, bend left=50] (b) edge (d) node[midway, above=.4cm] {\scriptsize 6};
            \node[left=2mm] at (a) {\bf 4.6};
            \node[right=4mm, below=1mm] at (b) {\bf 4.6};
            \node[left=2mm] at (c) {\bf 4.6};
            \node[right=2mm] at (d) {\bf 7.2};
        \end{scope}
        \begin{scope}[xshift=8cm]
            \node [scale=1.5, rectangle, draw, fill=black] (D) at (-.1,0) {};
            \node [customer] (a) at (-1.7,0.5) {\Large a};
            \node [customer] (b) at (-1,0) {\Large b};
            \node [customer] (c) at (-1.7,-0.5) {\Large c};
            \node [customer] (d) at (.7,0) {\Large d};
            \draw[color=blue] (D) -- (b) node[midway,above] {4};
            \draw[color=blue] (D) -- (d) node[midway,above] {3};
            \draw[color=blue] (a) -- (b) node[near end,above] {0};
            \draw[color=blue] (a) -- (c) node[midway,left] {0};
            \draw[color=blue] (b) -- (c) node[near start, below] {0};
            \path[color=blue, bend left=50] (b) edge (d) node[midway, above=.4cm] {\scriptsize 6};
            \node[left=2mm] at (a) {\bf 4};
            \node[right=2.5mm, below=1mm] at (b) {\bf 4};
            \node[left=2mm] at (c) {\bf 4};
            \node[right=2mm] at (d) {\bf 6};
        \end{scope}
    \end{tikzpicture}

    \vspace{.5cm}
    
    { \onehalfspacing \small
    \begin{tabular}{|l!{\vrule width 1.1pt}c|c|c|c|c|c|c!{\vrule width 1.1pt}l|}
        \hline
        \rowcolor{gray!10} \textbf{Excess} & \bm{$\{a\}$} & \bm{$\{d\}$} & \bm{$\{a,b\}$} & \bm{$\{a,d\}$} & \bm{$\{a,b,c\}$} & \bm{$\{a,b,d\}$} & \bm{$\{a,b,c,d\}$} & \\ \noalign{\hrule height 1.1pt}
        \cellcolor{gray!10} \textbf{Left} & 3 & 0 & \cellcolor{green!20}$-2$ & 2 & 1 & \cellcolor{green!20}$-2$ & 0 & \cellcolor{gray!10} \textbf{Nucleolus} \\ \hline
        \cellcolor{gray!10} \textbf{Middle} & 3.4 & \cellcolor{green!20}$-1.2$ & \cellcolor{green!20}$-1.2$ & 1.2 & \cellcolor{red!20}2.2 & \cellcolor{red!20}$-2.4$ & \cellcolor{red!20} 0 & \cellcolor{gray!10} \\ \hline
        \cellcolor{gray!10} \textbf{Right} & 4 & \cellcolor{green!20}0 & \cellcolor{green!20}0 & 3 & 4 & \cellcolor{green!20}0 & 3 & \cellcolor{gray!10} \textbf{Happy Nucleolus} \\ \hline
    \end{tabular}
    }
\end{center}
\caption{\small This capacitated vehicle routing instance asks for a set of tours that visit all customers a, b, c, d; 
here one tour can serve up to two customers. 
Again, distances are given by the metric closure of the numbers on depicted edges.  
Optimum integral and fractional vehicle routing solutions have total cost 21 and 18, respectively. 
The cost allocation shown on the left is the nucleolus, the one on the right is the happy nucleolus. 
The table shows all excess values (up to symmetric coalitions); green fields highlight the first entries of the excess vector.
The middle solution allocates the same amount as the nucleolus but optimizes the excess only for coalitions that can be served by a single tour
(red fields indicate ignored coalitions). 
Note that this solution differs from the nucleolus. The happy nucleolus would stay the same if we considered only coalitions served by a single tour. 
\label{fig:multtoursexp}}
\end{figure}

By \cref{prop:lp_allocated}, the excess vector of a happy nucleolus is nonnegative.
For the definition of the happy nucleolus, only one entry for each coalition, corresponding to a minimum-cost cover, is relevant.
Hence, the happy nucleolus can also be viewed as the nucleolus of the cooperative game $(P,\bar c)$ with 
 \[
\bar c(S) \ := \ \begin{cases} \min\, \left\{c(\Uscr) : \Uscr\subseteq\Tscr,\, \Uscr \text{ covers } S \right\} & \text{ if } S\neq P \\
\LP(P,\Tscr,c) & \text{ if } S=P \end{cases}
\]
for $S\subseteq P$; see also \cite{aziz2010monotone}. 
Schmeidler \cite{schmeidler1969nucleolus} proved that the nucleolus of any cooperative game is unique;
hence, also the happy nucleolus is unique.
Uniqueness will also follow from \cref{thm:few_constraints_determine_nucleolus} below. 
An immediate consequence of uniqueness is that symmetric players get the same value.

Another interesting property is that the happy nucleolus does not depend on coalitions $S \subseteq P$ 
whose minimum-cost cover requires more than one set from $\mathcal{T}$, 
a property that follows directly from prior work \cite{huberman1980nucleolus} (and from \cref{thm:few_constraints_determine_nucleolus} below),
but which does not hold for the (unhappy) nucleolus; an example can be found in \cite{chardaire2001core}. 
\cref{fig:multtoursexp} also illustrates this in the setting of capacitated vehicle routing. 
The property of the happy nucleolus that is crucial here compared to the (unhappy) nucleolus is the nonnegativity of its excess vector which ensures that summed excess values will be larger than their summands and thus less relevant for lexicographic excess maximization.

\cref{fig:first_random_example} shows a more complex example of the happy nucleolus in the context of vehicle routing.

\colorlet{darkgreen}{green!80!black}
\colorlet{color_a}{blue!28.5!darkgreen}
\colorlet{color_b}{blue!54!darkgreen}
\colorlet{color_c}{blue!8!darkgreen}
\colorlet{color_d}{blue!38.5!darkgreen}
\colorlet{color_e}{darkgreen!97.5!yellow}
\colorlet{color_f}{darkgreen!45.5!yellow}
\colorlet{color_g}{yellow!69!red}
\colorlet{color_h}{yellow!88!red}
\colorlet{color_i}{yellow!50!red}
\colorlet{color_j}{blue}
\colorlet{color_k}{darkgreen!99!yellow}
\colorlet{color_l}{yellow!15!red}
\colorlet{color_m}{darkgreen!91!yellow}
\colorlet{color_n}{yellow!57.5!red}
\colorlet{color_o}{yellow!74.5!red}

\begin{figure}[htb]
	\begin{minipage}{0.45\textwidth}
		\begin{center}
			\begin{tikzpicture}[scale=.5, every node/.style = {minimum size = 0.9cm, inner sep = 0pt}]
				
				\foreach \i in {0,...,10} {
					\draw [very thin,gray] (\i,0) -- (\i,10)  node [below] at (\i,0) {$\i$};
				}
				\foreach \i in {0,...,10} {
					\draw [very thin,gray] (0,\i) -- (10,\i) node [left] at (0,\i) {$\i$};
				}
				
				\node [scale=.4, rectangle, draw, fill=black] (D) at (6,9) {};
				\node [circle, fill=color_a!70, scale=.6] (1) at (4,6) {\huge \textnormal{\textbf{\textcolor{black}{a}}}};
				\node [circle, fill=color_b!70, scale=.6] (2) at (6,8) {\huge \textnormal{\textbf{\textcolor{black}{b}}}};
				\node [circle, fill=color_c!70, scale=.6] (3) at (4,4) {\huge \textnormal{\textbf{\textcolor{black}{c}}}};
				\node [circle, fill=color_d!70, scale=.6] (4) at (5,5) {\huge \textnormal{\textbf{\textcolor{black}{d}}}};
				\node [circle, fill=color_e!70, scale=.6] (5) at (4,9) {\huge \textnormal{\textbf{\textcolor{black}{e}}}};
				\node [circle, fill=color_f!70, scale=.6] (6) at (6,1) {\huge \textnormal{\textbf{\textcolor{black}{f}}}};
				\node [circle, fill=color_g!70, scale=.6] (7) at (10,5) {\huge \textnormal{\textbf{\textcolor{black}{g}}}};
				\node [circle, fill=color_h!70, scale=.6] (8) at (3,0) {\huge \textnormal{\textbf{\textcolor{black}{h}}}};
				\node [circle, fill=color_i!70, scale=.6] (9) at (10,10) {\huge \textnormal{\textbf{\textcolor{black}{i}}}};
				\node [circle, fill=color_j!70, scale=.6] (10) at (9,4) {\huge \textnormal{\textbf{\textcolor{black}{j}}}};
				\node [circle, fill=color_k!70, scale=.6] (11) at (5,3) {\huge \textnormal{\textbf{\textcolor{black}{k}}}};
				\node [circle, fill=color_l!70, scale=.6] (12) at (1,0) {\huge \textnormal{\textbf{\textcolor{black}{l}}}};
				\node [circle, fill=color_m!70, scale=.6] (13) at (7,2) {\huge \textnormal{\textbf{\textcolor{black}{m}}}};
				\node [circle, fill=color_n!70, scale=.6] (14) at (9,1) {\huge \textnormal{\textbf{\textcolor{black}{n}}}};
				\node [circle, fill=color_o!70, scale=.6] (15) at (1,4) {\huge \textnormal{\textbf{\textcolor{black}{o}}}};
			\end{tikzpicture}
		\end{center}
	\end{minipage}
	\begin{minipage}{0.55\textwidth}
		\begin{center}
			\onehalfspacing
			
			\begin{tabular}[t]{|c!{\vrule width 1.1pt}c|c|}
				\hline
				\rowcolor{gray!10} \bm{$p$} & \bm{$d(p)$} & \bm{$y$} \\ \noalign{\hrule height 1.1pt}
				a & 20 & \cellcolor{color_a!10} 2.43 \\ \hline
				b & 10 & \cellcolor{color_b!10} 1.92 \\ \hline
				c & 20 & \cellcolor{color_c!10} 2.84 \\ \hline
				d & 4 & \cellcolor{color_d!10} 2.33 \\ \hline
				e & 20 & \cellcolor{color_e!10} 3.05 \\ \hline
				f & 8 & \cellcolor{color_f!10} 4.09 \\ \hline
				g & 20 & \cellcolor{color_g!10} 5.62 \\ \hline
				h & 20 & \cellcolor{color_h!10} 5.24 \\ \hline
			\end{tabular}
			\hspace{.5cm}
			\begin{tabular}[t]{|c!{\vrule width 1.1pt}c|c|}
				\hline
				\rowcolor{gray!10} \bm{$p$} & \bm{$d(p)$} & \bm{$y$} \\ \noalign{\hrule height 1.1pt}
				i & 6 & \cellcolor{color_i!10} 6.00 \\ \hline
				j & 1 & \cellcolor{color_j!10} 1.00 \\ \hline
				k & 20 & \cellcolor{color_k!10} 3.02 \\ \hline
				l & 20 & \cellcolor{color_l!10} 6.70 \\ \hline
				m & 15 & \cellcolor{color_m!10} 3.18 \\ \hline
				n & 20 & \cellcolor{color_n!10} 5.85 \\ \hline
				o & 20 & \cellcolor{color_o!10} 5.51 \\ \hline
			\end{tabular}
		\end{center}
	\end{minipage}
	\caption{\small A capacitated vehicle routing instance with 15 customers, each with coordinates in $\{0,\ldots,10\}^2$. 
		One tour can serve up to five customers; all tours must begin and end at the depot (black square, at (6,9)). 
		The cost of a tour is the total Euclidean distance traveled. We can also decide not to serve a customer $p$
		and instead pay a dropping penalty $d(p)$, shown in the second column of the table.
		The third column shows (rounded values of) the happy nucleolus $y$, visualized by the color scale \includegraphics[width=.1\textwidth]{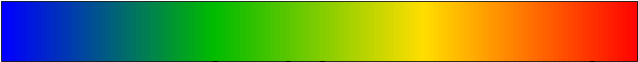} from 1 to 7.
		\label{fig:first_random_example}}
\end{figure}

\subsection{Related work}

Based on prior work about cost allocation in cooperative game theory \cite{gillies1959solutions, aumann1961bargaining, davis1965kernel}, Schmeidler defined the nucleolus \cite{schmeidler1969nucleolus}.
Axiomatizations for the nucleolus and related concepts were provided in \cite{kohlberg1971nucleolus, sobolev1975characterization,peleg1986reduced,potters1991axiomatization,snijders1995axiomatization}. 

The nucleolus can be computed by the \textit{MPS scheme} \cite{maschler1979geometric}, 
which is sometimes called Maschler's scheme although Kopelowitz \cite{kopelowitz1967computation} attributed it to Peleg.
In special cases, the algorithm was implemented, e.g.\@ for vehicle routing in \cite{gothe1996nucleolus, engevall2004heterogeneous}.
In some cases, a (pseudo-)polynomial running time could be obtained \cite{caprara2010new, konemann2020computing, konemann2020general, pashkovich2021computing}.
These special cases include neither set covering nor vehicle routing. 
Chen, Lu and Zhang \cite{chen2012computing} devised an algorithm for the nucleolus of weighted vertex cover games on bipartite graphs that runs polynomially in the number of players.
In many other cases,  the nucleolus is NP-hard to compute if the cost function is given in a compact form (not listing the cost for every coalition explicitly). 
This includes spanning tree games, where an early proof of NP-hardness was provided by Faigle, Kern and Kuipers \cite{faigle1998note}. 
See also \cite{greco2015complexity} for an overview.

Since the core is often empty, various relaxations have been proposed.
The set of cost allocations that distribute the total cost and violate all group rationality constraints by at most an additive constant $\varepsilon$ has often been called the \emph{$\varepsilon$-core} \cite{shapley1966quasi}.
For the minimum $\varepsilon$ for which this is nonempty, the $\varepsilon$-core is called the \emph{least core} \cite{maschler1979geometric}.
Therefore, the nucleolus is part of the least core.
One could also relax group rationality multiplicatively by a factor $(1+\varepsilon)$, resulting in the \emph{$\varepsilon$-approximate core} \cite{faigle1998approximately}. 
This is equivalent to the option that we will follow, 
which is to insist on group rationality 
and distribute a $\gamma$-fraction of the total cost \cite{jain2007cost}. This approach has also been followed by \cite{bachrach2009cost, caprara2010new, liu2016computing}.
The happy nucleolus is such a cost allocation for the maximum possible $\gamma$.

Completely different concepts for cost allocation include the \emph{Shapley value} \cite{shapley1953value} and the \emph{$\tau$-value} \cite{tijs1981bounds}.
For vehicle routing, these and other concepts have been analyzed in \cite{krajewska2008horizontal, frisk2010cost, ozener2013allocating, dahlberg2015cooperative, agussurja2016achieving, dahlberg2018consolidation}.
Guajardo and R\"onnqvist \cite{guajardo2016review} and Schulte, Lalla-Ruiz, Schwarze, Gonz{\'a}lez-Ram{\'i}rez, and Vo{\ss} \cite{schulte2019scalable} wrote surveys about cost allocation in vehicle routing and TSP.

None of the previously proposed concepts can guarantee all of the following three desirable properties:
\begin{itemize}
\item group rationality,
\item symmetry, i.e., identical copies of customers get the same share,
\item independence on disjoint subinstances, i.e., if we have two instances on two disjoint player sets $P_1$ and $P_2$, then the cost allocations of the separate instances also form the cost allocation of the united instance.\footnote{This also holds for the nucleolus and is a direct consequence of the \emph{reduced game property} \cite{sobolev1975characterization}.}
\end{itemize}
The happy nucleolus satisfies all three properties. The third one follows from the above-mentioned property that coalitions whose minimum-cost cover uses multiple sets from $\mathcal{T}$ can be ignored when determining the happy nucleolus. We will further strengthen this in the following section (cf.\ \cref{thm:few_constraints_determine_nucleolus}).

\subsection{Main results}

The happy nucleolus can be computed by the MPS scheme, designed for computing the nucleolus \cite{maschler1979geometric, kopelowitz1967computation}.
It maximizes the smallest excess by solving a linear program,
computes all coalitions whose excess is always among the smallest excess values, fixes the total contribution $y(S)$ of such coalitions $S$,
and iterates by maximizing the next smallest excess. 
This is computationally very expensive if all coalitions need to be considered.
Computational details of the MPS scheme can be found in \cite{nguyen2016finding}.
Furthermore, it is already NP-hard to compute the excess of a single coalition (this requires solving a set covering problem).

However, once we have the (happy) nucleolus of a set covering instance $(P,\Tscr,c)$, it is actually determined by only $2|P|-2$ coalitions \cite{reijnierse1998b}.
This bound is tight as we will show in \cref{thm:aposteriorilowerbound}. 
Unfortunately, those coalitions are known only \emph{a posteriori}.  
We will show that \emph{a priori}, one can restrict all computations to fewer than $|P|\cdot|\Tscr|$ coalitions, each with a trivial cover
(by a single set from $\Tscr$). We call a pair $(S,\Uscr)\in\Cscr$ \emph{simple} if $|\Uscr|=1$ and
denote by 
\[
\Cscr^{\textnormal{simple}} \ := \ \{(S,\Uscr)\in\Cscr: |\Uscr|=1\}
\] 
the family of simple pairs. 

\begin{definition}[determining the happy nucleolus]
Let $(P,\Tscr,c)$ be a set covering instance.
For a vector $y\in\mathbb{R}^P_{\ge 0}$ and a subset $\Cscr'\subseteq\Cscr$, the $\Cscr'$-excess vector $\exc^y[\Cscr']$ 
contains the entries $\exc^y(S,\Uscr)$ for $(S,\Uscr)\in\Cscr'$ in nondecreasing order.
We say that a subset $\Cscr'$ of $\Cscr$ \emph{determines the happy nucleolus} if
the happy nucleolus is the unique vector $y$ that lexicographically maximizes $\exc^y[\Cscr']$ among all     
$y\in\mathbb{R}_{\ge 0}^P$ with $y(P)=\LP(P,\Tscr,c)$.
\end{definition}

For the (unhappy) nucleolus, such sets $\Cscr'$ are known as \emph{characterization sets} which were independently introduced by Granot, Granot and Zhu \cite{granot1998characterization} as well as Reijnierse and Potters \cite{reijnierse1998b}. We refer to \cite{sziklai2015computation} for an overview.

The coalitions we need to consider are all given by the elements of $\Tscr$ and the sets that result from these by removing one player:
\begin{equation*}
\Cscr^* \ := \ \left\{ (S,\{T\}) : T\in\Tscr,\, S \notin \{\emptyset,P\},\, \left(S=T \text{ or } S=T\setminus\{p\} \text{ for some } p\in T \big.\right) \Big.\right\} \enspace.
\end{equation*}
Note that $\Cscr^*\subseteq \Cscr^{\textnormal{simple}}$. 
We will show that $\Cscr^*$ always determines the happy nucleolus.
In fact, we need only efficient covers:
\begin{equation*}
\Cscr^{**} \ := \ \left\{ (S,\{T\}) \in \Cscr^* : \{T\}\in\arg\min\{c(\Uscr): \Uscr\subseteq\Tscr,\, \Uscr \text{ covers } S \} \Big.\right\} \enspace.
\end{equation*}
We remark that while the family $\Cscr^{**}$ can be smaller, it may not be easy to compute. 
Moreover, in contrast to $\Cscr^*$, the family $\Cscr^{**}$ depends on the cost function $c$.

\begin{restatable}{theorem}{mainthm} \label{thm:few_constraints_determine_nucleolus}
Let $(P,\Tscr,c)$ be a set covering instance. Then every $\bar\Cscr\supseteq \Cscr^{**}$ determines the happy nucleolus.
\end{restatable}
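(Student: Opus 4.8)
The plan is to run the MPS scheme \cite{maschler1979geometric} that defines the happy nucleolus, and to show that at every iteration the binding constraints can be drawn from $\Cscr^{**}$. Concretely, the happy nucleolus $y^*$ is the unique point produced by lexicographically maximizing the coalition excess vector $\exc^y$ over the polytope $Q_0 := \{y\in\mathbb{R}^P_{\ge 0} : y(P)=\LP(P,\Tscr,c)\}$: the scheme repeatedly maximizes the smallest not-yet-fixed excess, intersects the current face with the resulting optimal face, and fixes the coalitions that are tight throughout it. I would prove that the same sequence of faces arises when the scheme is instead run on the pair-excess vector $\exc^y[\bar\Cscr]$, so both schemes terminate at the single point $y^*$; since the scheme computes the \emph{unique} lexicographic maximizer, this shows $y^*$ is the unique maximizer of $\exc^y[\bar\Cscr]$, i.e.\ that $\bar\Cscr$ determines the happy nucleolus. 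Two reduction lemmas drive this.

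First, a group-rationality lemma: a feasible $y$ satisfies group rationality if and only if $\exc^y(\sigma)\ge 0$ for all $\sigma\in\Cscr^{**}$. The nontrivial direction is proved by induction on the well-founded measure $(\bar c(S),|S|)$ (ordered lexicographically). Writing a minimum-cost cover of a coalition $S\neq P$ as $\{U_1,\dots,U_k\}$ and splitting $S$ into a disjoint union $S_1\cupp\cdots\cupp S_k$ with $S_i\subseteq U_i$, the identity $\exc^y(S)=\sum_i \exc^y(S_i,\{U_i\})$ handles $k\ge 2$ (each summand is $\ge\exc^y(S_i)\ge 0$ by induction), while for $k=1$ the relation $\exc^y(S)\ge\exc^y(T,\{T\})$ for the covering set $T\supseteq S$ reduces either to the inductive hypothesis or directly to a pair of $\Cscr^{**}$. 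The same decomposition identity, now used for an arbitrary group-rational $y$, yields the reduction lemma: every coalition excess $\exc^y(S)$ is bounded below by $\exc^y(\sigma)$ for some $\sigma\in\Cscr^{**}$, whence $\min_S \exc^y(S)=\min_{\sigma\in\Cscr^{**}}\exc^y(\sigma)=\min_{\sigma\in\bar\Cscr}\exc^y(\sigma)$. In particular, any lexicographic maximizer of $\exc^y[\bar\Cscr]$ has nonnegative smallest entry and is therefore group-rational, so the whole analysis may be restricted to the group-rational part of $Q_0$, where these lemmas apply.

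The heart of the argument is the invariant, maintained along the scheme, that the affine subspace cut out by the coalitions tight on the current face equals the one cut out by the tight pairs from $\Cscr^{**}$. At a step with threshold $\epsilon$, a tight coalition $S$ with single-set minimum cover $T$ and $S=T$ or $S=T\setminus\{p\}$ contributes a pair of $\Cscr^{**}$ directly. The delicate case is $S\subsetneq T$ with $|T\setminus S|\ge 2$: from $\exc^y(S)=\exc^y(T,\{T\})+\sum_{p\in T\setminus S} y_p$ together with $\exc^y(T\setminus\{p\},\{T\})=\exc^y(T,\{T\})+y_p\ge\epsilon$ and $\exc^y(S)=\epsilon$, one is forced to conclude $y_p=0$ for every $p\in T\setminus S$, that $(T,\{T\})$ and each $(T\setminus\{p\},\{T\})$ are tight, and that these pairs lie in $\Cscr^{**}$ (their single-set cover is then a minimum-cost cover); hence $y(S)=y(T)$ is already fixed by $\Cscr^{**}$. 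For a tight coalition with multi-set minimum cover, the decomposition writes $y(S)$ as a sum of simple excess functionals that are each already fixed, so $y(S)$ is again captured. This keeps the two schemes in lockstep, and they terminate at the same unique point $y^*$.

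Finally, the passage from $\Cscr^{**}$ to an arbitrary $\bar\Cscr$ with $\Cscr^{**}\subseteq\bar\Cscr\subseteq\Cscr$ is routine: since $\bar\Cscr\subseteq\Cscr$, every pair of $\bar\Cscr$ has excess at least $\min_{\sigma\in\Cscr^{**}}\exc^y(\sigma)$ on the group-rational region, and since $\Cscr^{**}\subseteq\bar\Cscr$, all $\Cscr^{**}$-pairs remain available at each step; thus the extra pairs of $\bar\Cscr\setminus\Cscr^{**}$ never strictly tighten the face before $y^*$ is already determined, and the scheme still produces $y^*$. I expect the main obstacle to be verifying the invariant at \emph{every} iteration rather than only the first, in particular controlling the case $|T\setminus S|\ge 2$ and confirming $\Cscr^{**}$-membership of the replacement pairs inside the residual, partially fixed polytope. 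This is exactly where the fine set-covering structure is needed and where a bare appeal to subadditivity (essential-coalition arguments as in \cite{huberman1980nucleolus}) would not suffice.
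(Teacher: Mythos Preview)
Your plan differs structurally from the paper's proof. The paper does not track the MPS scheme at all; instead it proves a one-shot subsystem lemma (\cref{lem:subsystem}): if every row of the full system is a linear combination of rows of a subsystem that have smaller or equal excess at the subsystem's lexicographic optimizer, then the two lexicographic optimizers coincide. The key identity is
\[
A_{S_i}\ =\ \sum_{p\in U_i\setminus S_i}A_{U_i\setminus\{p\}}\ -\ \bigl(|U_i\setminus S_i|-1\bigr)\,A_{U_i},
\]
which writes the incidence vector of any $S_i\subseteq U_i$ as an affine combination of the $\Cscr^{**}$-rows for $U_i$ and $U_i\setminus\{p\}$; each of these has excess at most $\exc^y(S_i,\{U_i\})$ simply because $S_i\subseteq U_i\setminus\{p\}\subseteq U_i$. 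This inequality holds for \emph{every} feasible $y$, so no iteration-by-iteration bookkeeping is needed.

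Your approach has a genuine gap exactly where you anticipate it. In the ``delicate case'' $S\subsetneq T$ with $|T\setminus S|\ge 2$ you use $\exc^y(T\setminus\{p\},\{T\})\ge\epsilon$ to force $y_p=0$ for every $p\in T\setminus S$. That inequality is guaranteed only at the first stage; at later stages some pairs $(T\setminus\{p\},\{T\})$ may already have been fixed at a value strictly below the current threshold, and then the conclusion $y_p=0$ is simply false. For instance, if $(T,\{T\})$ and $(T\setminus\{p\},\{T\})$ are both fixed at stage~1 with common excess $\epsilon_1$ (forcing $y_p=0$), then $S=T\setminus\{p,q\}$ can become tight at stage~2 with $y_q=\epsilon_2-\epsilon_1>0$. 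What actually happens in such a situation is that $(T\setminus\{q\},\{T\})$ is tight at stage~2 as well, so the affine constraint for $S$ is still spanned by $\Cscr^{**}$-constraints --- but your argument does not establish this; it needs to distinguish previously fixed from not-yet-fixed $\Cscr^{**}$-pairs and combine them, which is precisely the bookkeeping you flagged as the main obstacle. The paper's affine identity above resolves this cleanly: it shows directly that the row for $S$ lies in the span of $\Cscr^{**}$-rows of no larger excess, an inequality valid at every point of $K$, after which the stage-by-stage induction becomes unnecessary and collapses into the single application of \cref{lem:subsystem} that the paper uses.
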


We prove this result in \cref{sec:few_coalitions_suffice}.
We will also show that (at least for many set systems $\Tscr$), 
among all families of simple pairs, $\Cscr^*$ is the unique minimal one for which every superset determines the happy nucleolus
for every cost function $c$. We refer to \cref{section:lowerbound} for details.

A consequence of \cref{thm:few_constraints_determine_nucleolus} is: 

\begin{restatable}{corollary}{polytimecorollary} \label{cor:happynucleolus_in_polytime}
For any given set covering instance $(P,\Tscr,c)$, the happy nucleolus can be computed in time bounded by a polynomial in $|P|$ and $|\Tscr|$.
\end{restatable}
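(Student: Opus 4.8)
The plan is to combine \cref{thm:few_constraints_determine_nucleolus} with the observation that the explicit family $\Cscr^*$ is small and consists of pairs whose excess is a linear function of $y$, thereby reducing the whole task to lexicographic optimization over polynomially many linear constraints. Since $\Cscr^{**}\subseteq\Cscr^*$, \cref{thm:few_constraints_determine_nucleolus} guarantees that $\Cscr^*$ determines the happy nucleolus; that is, the happy nucleolus is the unique $y\in\mathbb{R}_{\ge0}^P$ with $y(P)=\LP(P,\Tscr,c)$ that lexicographically maximizes $\exc^y[\Cscr^*]$. It therefore suffices to compute this maximizer in polynomial time.

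First I would bound and build $\Cscr^*$. Every pair in $\Cscr^*$ has the form $(T,\{T\})$ or $(T\setminus\{p\},\{T\})$ for some $T\in\Tscr$ and $p\in T$; a short count shows that each $T$ contributes at most $|P|$ such pairs, so $|\Cscr^*|\le|P|\cdot|\Tscr|$, and the family is enumerated in time polynomial in $|P|$ and $|\Tscr|$. The key structural point is that for a simple pair the excess $\exc^y(S,\{T\})=c(T)-y(S)$ is an affine-linear function of the variables $y_p$. Hence maximizing $\exc^y[\Cscr^*]$ lexicographically is lexicographic maximization of the sorted vector of polynomially many affine-linear functions of $y$, over the polyhedron defined by $y\ge0$ and the single equality $y(P)=\LP(P,\Tscr,c)$. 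The value $\LP(P,\Tscr,c)$ is obtained once by solving the linear program in its definition, which is of polynomial size.

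Next I would run the MPS (iterated linear programming) scheme on this linear formulation. A single iteration solves the polynomial-size LP that maximizes a scalar $\epsilon$ subject to $\exc^y(S,\{T\})\ge\epsilon$ for all not-yet-fixed pairs, together with the previously fixed equalities, $y\ge0$, and $y(P)=\LP(P,\Tscr,c)$. Given the optimal value $\epsilon_1$, one further polynomial-size LP -- for instance maximizing $\sum(\exc^y(S,\{T\})-\epsilon_1)$ over the remaining pairs -- identifies exactly those pairs whose excess equals $\epsilon_1$ in every optimal solution (namely the pairs with zero slack in its optimum); such pairs are permanently fixed by the equalities $\exc^y(S,\{T\})=\epsilon_1$, and the scheme recurses on the rest. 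Each iteration fixes at least one new pair and reduces the dimension of the feasible affine subspace, so there are at most $|P|$ iterations, each solving LPs of size polynomial in $|P|$ and $|\Tscr|$; the overall running time is thus polynomial.

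Finally, correctness follows because the MPS scheme returns the lexicographic maximizer of $\exc^y[\Cscr^*]$, which by \cref{thm:few_constraints_determine_nucleolus} is unique and equals the happy nucleolus. I do not expect a genuine obstacle here: the only slightly delicate ingredient is the standard detection of permanently fixed constraints inside each iteration, while the conceptually hard step -- replacing the exponentially many, individually NP-hard coalitional excesses by the polynomially many, trivially evaluable simple excesses of $\Cscr^*$ -- has already been carried out in \cref{thm:few_constraints_determine_nucleolus}.
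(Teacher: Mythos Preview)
Your overall plan coincides with the paper's: invoke \cref{thm:few_constraints_determine_nucleolus} to replace the exponential excess vector by the polynomial family $\Cscr^*$, and then run the MPS scheme on the resulting polynomially many affine constraints. Two steps of your implementation, however, do not go through as written.

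\medskip
\noindent\textbf{Detecting the pairs to be fixed.} You propose to solve one auxiliary LP maximizing $\sum_{(S,\{T\})}(\exc^y(S,\{T\})-\epsilon_1)$ over the current optimal face and to declare permanently tight exactly the pairs with zero slack at its optimum. This does \emph{not} isolate the pairs that are tight at \emph{every} point of the optimal face. For instance, let the optimal face be a segment on which a pair $A$ is tight everywhere, while two further pairs $B,C$ have excesses varying linearly, with $B$ tight only at the left endpoint and $C$ never tight; choosing the slopes so that the sum of excesses is larger at the left endpoint, your rule also fixes $B$ and thereby pins $y$ to the left endpoint. But the lexicographic maximizer over the face (and hence the happy nucleolus in any instance realizing this picture) may sit at the right endpoint, so your algorithm can output the wrong vector. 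The paper avoids this by solving the \emph{dual} of the iteration LP~\eqref{eq:maschler_scheme_lp}: by complementary slackness, every pair with positive dual variable $z^*_{(S,\{T\})}$ is tight in every primal optimum, and the dual constraint $\sum z_{(S,\{T\})}\ge 1$ guarantees at least one such pair. (An alternative that also works is to maximize, for each remaining pair separately, its excess over the optimal face; this costs one LP per pair but is still polynomial.) With only the dual-based guarantee of ``at least one'' pair per iteration, the paper bounds the number of iterations by $|\Cscr^*|$ rather than by $|P|$.

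\medskip
\noindent\textbf{Strongly polynomial running time.} The corollary asserts a bound polynomial in $|P|$ and $|\Tscr|$ \emph{only}, i.e., independent of the encoding length of $c$. Saying that the LPs are ``of polynomial size'' and invoking a generic LP solver yields a bound that also depends on the bit length of the costs $c(T)$, of $\LP(P,\Tscr,c)$, and of the successive thresholds $\epsilon_j$. The paper closes this gap by observing that all constraint matrices arising are $0$--$1$ matrices and invoking Tardos' theorem, which gives a running time polynomial in the matrix dimensions regardless of the right-hand sides and objective. Without this (or an equivalent argument) you have established a weaker statement than the corollary claims.
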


This is in contrast to the (unhappy) nucleolus, which in particular involves finding the optimum integral solution value of the given set covering instance, 
which is NP-hard \cite{karp1972reducibility}. 
If the core is nonempty, the happy nucleolus coincides with the nucleolus. 
To the best of our knowledge, it was not known before that the nucleolus for set covering can be computed in polynomial time in this case.

\section{Few coalitions suffice}\label{sec:few_coalitions_suffice}

In this section, we prove \cref{thm:few_constraints_determine_nucleolus} and \cref{cor:happynucleolus_in_polytime}.
We begin with a key lemma.

\subsection{Nucleolus-defining constraints}

Let $Ay\le b$ be a linear inequality system with variables $y\in\mathbb{R}^P$, and let $A' y\le b'$ be a sub-system of $Ay\le b$,
arising by taking a subset of the rows (inequalities).
For a vector $y\in\mathbb{R}^P$, we consider the slack (or excess) $b_i-A_iy$ of each inequality $A_i y\le b_i$ and define
$\exc^y[A,b]$ to be the vector that has the entries of $b-Ay$, but sorted in nondecreasing order.
Analogously, $\exc^y[A',b']$ has the entries of $b'-A'y$, but sorted in nondecreasing order.

\begin{lemma}\label{lem:subsystem}
Let $K \subseteq \mathbb{R}^P$ be convex.
If a vector $y\in K$ lexicographically maximizes $\exc^y[A',b']$ among all vectors in $K$
and every row of $A$ is a linear combination of rows of $A'$ that have smaller or equal excess with respect to $y$, 
then $y$ lexicographically maximizes $\exc^y[A,b]$ among all vectors in $K$.
Moreover, $Ay$ is the same for all such vectors $y$ 
(and hence $y$ is unique if the columns of $A$ are linearly independent). 
\end{lemma}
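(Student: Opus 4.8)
The plan is to prove this by a standard "lexicographic maximization transfers to spanned constraints" argument, handling the two claims (lexicographic optimality and uniqueness of $Ay$) together. Let me first set up notation. Suppose $y \in K$ lexicographically maximizes $\exc^y[A',b']$, and suppose every row $A_i$ of $A$ is written as $A_i = \sum_j \lambda_{ij} A'_j$ where each $A'_j$ appearing with $\lambda_{ij}\neq 0$ satisfies $b'_j - A'_j y \le b_i - A_i y$ (the hypothesis "smaller or equal excess"). The first thing I would want to record is a consequence of this linear-combination hypothesis: it is not just that $A_i$ is in the row space of $A'$, but that $b_i = \sum_j \lambda_{ij} b'_j$ as well, so that the excess $b_i - A_i y = \sum_j \lambda_{ij}(b'_j - A'_j y)$ is the same affine combination. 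I expect this to be the precise meaning intended by "$A_i$ is a linear combination of rows of $A'$" in this excess-preserving sense, and I would state it explicitly so the coefficients $\lambda_{ij}$ can be assumed nonnegative (or at least that the combination is a convex/affine one that preserves the excess value). Clarifying exactly which rows have "smaller or equal excess" and with what sign their coefficients enter is the step I would be most careful about, since the whole argument hinges on it.

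The core argument is then the following. Take any other $z \in K$. I want to show $\exc^z[A,b] \preceq \exc^y[A,b]$ lexicographically. The key observation is that the excess of each row of $A$ is a fixed affine combination of excesses of rows of $A'$ whose $y$-excesses are no larger. Since $y$ lexicographically maximizes $\exc^y[A',b']$ over $K$, for the sorted prefix of $A'$-constraints up to any given threshold, $z$ cannot do better on all of them simultaneously; more precisely, for each index $t$, the $t$ smallest entries of $\exc^z[A',b']$ cannot all exceed the corresponding entries of $\exc^y[A',b']$. I would combine this with the affine-combination representation: the smallest entry of $\exc^z[A,b]$ corresponds to some row $A_i$ whose $z$-excess is a convex/affine combination of $A'$-excesses at $z$, each of which is bounded by the smallest $A'$-excesses, which in turn are maximized by $y$. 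Carrying this through inductively on the sorted position yields $\exc^z[A,b] \preceq \exc^y[A,b]$, with equality of the full sorted vector forcing equality at each constraint, which is what gives the "$Ay$ is the same for all such $y$" conclusion.

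For the uniqueness of $Ay$: if $y$ and $\tilde y$ both lexicographically maximize $\exc^{\cdot}[A',b']$ over the convex set $K$, then by convexity their midpoint $\tfrac12(y+\tilde y) \in K$ as well, and a standard convexity argument on the sorted excess vector (the lexicographic maximizer of a concave-in-each-coordinate sorted-slack functional over a convex set is unique in its induced values) shows that $A'y = A'\tilde y$, hence $b'-A'y = b'-A'\tilde y$; the affine-combination hypothesis then propagates this to $Ay = A\tilde y$. The final parenthetical, that $y$ itself is unique when the columns of $A$ are linearly independent, is immediate: $Ay = A\tilde y$ with $A$ injective as a linear map forces $y = \tilde y$.

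The step I expect to be the main obstacle is making the inductive "sorted prefix" comparison fully rigorous, because sorting is only piecewise linear and one must argue about which constraints land in which position of the sorted vector. The cleanest way around this, which I would adopt, is the classical reformulation: $y$ lexicographically maximizes a sorted-slack vector over $K$ if and only if it maximizes the smallest slack, then among maximizers maximizes the second smallest, and so on; at each stage the set of constraints that are "tight at the current level" for $y$ spans (via the affine-combination hypothesis) the corresponding constraints of $A$, so no $z \in K$ can strictly improve the $A$-vector at that level without improving an $A'$-constraint that $y$ already maximized. This level-by-level argument avoids delicate reasoning about ties in the sort order and is where the convexity of $K$ is genuinely used.
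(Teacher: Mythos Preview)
Your proposal has a genuine gap at its foundation: you misread the hypothesis. The lemma says every row $A_i$ of the matrix $A$ is a linear combination of rows $A'_j$ of $A'$ with smaller or equal excess. It does \emph{not} say anything about the right-hand sides, and in particular it does not say $b_i=\sum_j\lambda_{ij}b'_j$. Your statement that ``$b_i=\sum_j\lambda_{ij}b'_j$ as well, so that the excess $b_i-A_iy=\sum_j\lambda_{ij}(b'_j-A'_jy)$ is the same affine combination'' is simply not part of the hypothesis, and it is false in the paper's main application (Theorem~\ref{thm:few_constraints_determine_nucleolus}): there the row $A_{S_i}$ is written as $\sum_{p\in U_i\setminus S_i}A_{U_i\setminus\{p\}}-(|U_i\setminus S_i|-1)A_{U_i}$, and the right-hand sides (all equal to $c(U_i)$) do not satisfy the corresponding identity. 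For the same reason, your further assumption that the coefficients $\lambda_{ij}$ can be taken nonnegative (or form a convex/affine combination) is also unwarranted; in the application they are explicitly negative. Since your entire ``core argument'' rests on expressing each $A$-excess as a combination of $A'$-excesses, it collapses once this is removed.

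The paper's proof uses a different idea that never needs $b$ to behave well under the combination. One takes $z\in K$ lexicographically maximizing $\exc^z[A,b]$, sorts the rows of $A$ by their $y$-excess (putting $A'$-rows first among ties), and looks at the first row $j$ with $A_jy\neq A_jz$. If row $j$ is not in $A'$, the hypothesis makes $A_j$ a linear combination of \emph{earlier} rows, on which $y$ and $z$ already agree, forcing $A_jy=A_jz$ after all. If row $j$ is in $A'$, the midpoint $w=\tfrac12(y+z)$ strictly improves the $A'$-excess vector over $y$, contradicting the choice of $y$. This is where convexity of $K$ is used, and nothing about $b$ or signs of coefficients enters. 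Your uniqueness paragraph has the right flavor (midpoint argument), but the main claim needs to be redone along these lines.
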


\begin{proof}
Let $y\in K$ be a vector that lexicographically maximizes $\exc^y[A',b']$ among all vectors in $K$.
Assume that every row of $A$ is a linear combination of rows of $A'$ that have smaller or equal excess.
Let $z\in K$ be a vector that lexicographically maximizes $\exc^z[A,b]$ among all vectors in $K$.
We show $Ay=Az$.

Sort the rows of the entire system $Ay\le b$ according to their excess with respect to $y$, 
and as a tie-breaker take rows of $A'y\le b'$ first. Then $b_1-A_1y\le b_2-A_2y\le\dots$ .

Suppose $Ay\neq Az$.
Let $A_jy\le b_j$ be the first row for which $A_jy\neq A_jz$.
Then $b_i-A_iz=b_i-A_iy$ for $i<j$, and $b_i-A_iz\ge b_j-A_jy$ for all $i\ge j$ because $z$ lexicographically maximizes $\exc^z[A,b]$ among all vectors in $K$.
In particular, $b_j-A_jz > b_j-A_jy$.

First consider the case that row $j$ does not belong to the sub-system $A'y\le b'$. Then $A_j$ is a linear combination of earlier rows,
i.e., there are coefficients $\lambda_i\in\mathbb{R}$ such that $A_j=\sum_{i<j}\lambda_i A_i$.
But then $A_jz = \sum_{i<j}\lambda_i A_iz = \sum_{i<j}\lambda_i A_iy = A_jy$, a contradiction.

Now consider the case that row $j$ belongs to the sub-system $A'y\le b'$.
Let  $w=\frac{1}{2}(y+z)$. Note that $w \in K$.
We claim that $\exc^w[A',b']$ is lexicographically greater than $\exc^y[A',b']$.
To this end, observe: 
\begin{align*}
b_i-A_iw &\ = \ b_i-A_iy && \text{ for all $i<j$ } \\ 
b_j-A_jw &\ > \ b_j-A_jy && \\
b_i-A_iw &\ \ge \ b_j-A_jy && \text{ for all $i>j$ with $b_i-A_iy=b_j-A_jy$} \\
b_i-A_iw &\ > \ b_j-A_jy && \text{ for all $i>j$ with $b_i-A_iy>b_j-A_jy$} 
\end{align*}
So indeed $\exc^w[A',b']$ is lexicographically greater than $\exc^y[A',b']$, contradicting the choice of $y$.
\end{proof}

Similar arguments can be found in \cite{davis1965kernel} and \cite{kopelowitz1967computation}. 

Reijnierse and Potters \cite{reijnierse1998b} showed that \emph{a posteriori} $2|P|-2$ rows suffice and presented an example 
where a family of $2|P|-2$ rows is inclusionwise minimal. Alas, this does not imply that no family with fewer elements determines the happy nucleolus.
In Section~\ref{section:aposteriori_lowerbound}, we present a different tight example (a set covering instance with nonempty core)
and prove that no family with fewer than $2|P|-2$ sets determines the (happy) nucleolus.

However, our main focus is to prove an \emph{a priori} bound.

\subsection{Smallest family defining the happy nucleolus for set covering}\label{sec:set_cover}

The excess vector contains exponentially many entries, and computing a single entry is in general NP-hard. 
Our goal is to show that it suffices to consider an excess vector with relatively few entries that we can determine \emph{a priori} and compute easily.
As we will now prove, it actually suffices to consider at most $|P| \cdot |\Tscr|$ entries, namely those corresponding to elements of $\Cscr^{**}$.
Any superset is also fine; in particular, we can take $\Cscr^*$ because $\Cscr^{**}$ may be difficult to compute.

\mainthm*

\begin{proof}
	Let $\bar\Cscr\supseteq \Cscr^{**}$. Let $K = \{y\in\mathbb{R}_{\ge 0}^P : y(P)=\LP(P,\Tscr,c)\}$. Obviously, $K$ is convex. 
	For a coalition $S$, let $\Uscr_S\subseteq\Tscr$ such that $\Uscr_S$ covers $S$ and $c(\Uscr_S)$ is minimum (we can break ties arbitrarily).
	Let $\Cscr' := \{(S,\Uscr_S) : \emptyset\not=S\subseteq P\}$. 
	By definition, $\exc^y[\Cscr']=\exc^y$ for all $y\in K$.

	Consider the linear inequality system $Ay \le b$ that contains for each pair $(S,\Uscr)\in\Cscr$ a row $\sum_{p \in S} y_p \le c(\Uscr)$. 
	Let $A' y \le b'$ be the sub-system of $Ay\le b$ that arises by taking the rows that correspond to elements of $\Cscr'$.
	We apply \cref{lem:subsystem} to the sub-system $A'y \le b'$, the set $K$, 
	and a vector $y \in K$ lexicographically maximizing $\exc^y[\Cscr']$. 
	This is possible because every row of $A$ is equal to a row of $A'$ with smaller or equal excess.
	Hence the happy nucleolus (which lexicographically maximizes $\exc^y=\exc^y[\Cscr']$) is the unique vector
	that lexicographically maximizes $\exc^y[\Cscr]$.

	Let now $\bar A y \le \bar b$ be the sub-system of $Ay\le b$ that arises by taking the rows that correspond to the elements of $\bar\Cscr$.
	We will show that we can also apply \cref{lem:subsystem} to the sub-system $\bar A y \le \bar b$, the set $K$, 
	and a vector $y \in K$ that lexicographically maximizes $\exc^y[\bar\Cscr]$.
	This will immediately finish the proof. 
	Consider one row of $Ay \le b$, which corresponds to a pair $(S,\Uscr)\in\Cscr$. 
	Let $\Uscr_S=\{U_1,\dots, U_k\} \in \arg\min\{c(\Uscr') : \Uscr' \subseteq \Tscr,\, \Uscr' \text{ covers } S\}$, 
	and without loss of generality let $\Uscr_S$ be minimal (i.e., no subset of $\Uscr_S$ covers $S$).
	Let $S_i:= (S\cap U_i)\setminus(U_1\cup\cdots\cup U_{i-1})$ for $i=1,\ldots,k$; so $\{S_1,\ldots,S_k\}$ is a partition of $S$. 
	
	Let $y \in K$ lexicographically maximize $\exc^y[\bar\Cscr]$. 
	For the excess of $(S, \Uscr)$, we get
	\begin{equation} \label{eq:boundexcessbycover}
		c(\Uscr)-y(S) \ \ge \ c(\Uscr_S)-y(S) \ = \ \sum_{i=1}^k (c(U_i) - y(S_i)) \enspace.
	\end{equation}
	We claim that all terms on the right-hand side of~\eqref{eq:boundexcessbycover} are nonnegative.
	Suppose not. Then $c(U_i) - y(U_i) \le c(U_i) - y(S_i) < 0$ for some $i$, which means $U_i\not=P$.
	By \cref{prop:lp_allocated} and $(U_i,\{U_i\})\in\Cscr^{**}\subseteq\bar\Cscr$, 
	the fact that $y$ lexicographically maximizes $\exc^y[\bar\Cscr]$ implies $c(U_i) - y(U_i) \geq 0$, a contradiction.
	 
	Since all terms on the right-hand side of~\eqref{eq:boundexcessbycover} are nonnegative, 
	the excess of $(S_i,\{U_i\})$ is smaller than or equal to the excess of $(S,\Uscr)$. 
	Obviously, $A_S$ is a linear combination (actually: the sum) of the vectors $A_{S_i}$, 
	where $A_R$ for $R \subseteq P$ denotes the incidence vector of $R$. 
	
	Hence, to conclude the proof, we will show that each $A_{S_i}$ is a linear combination of rows of $\bar A$ 
	with excess smaller than or equal to the excess $c(U_i) -y(S_i)$ of $(S_i, \{U_i\})$. 
	Note that for each $i=1,\dots,k$, the vector $A_{S_i}$ can be written as
	\[
	A_{S_i} \ = \ \sum_{p \in U_i \setminus S_i} A_{U_i \setminus \{p\}} - \left(|U_i \setminus S_i | -1 \right) \cdot A_{U_i} \enspace.
	\]
	Moreover, for each row $A_R$ on the right-hand side of the equation, 
	$S_i \subseteq R$ and thus $(R, \{U_i\})$ has no larger excess than $(S_i, \{U_i\})$.
	Because $(R, \{U_i\})$ belongs to $\Cscr^{**}$ and hence to $\bar\Cscr$, this finishes the proof.
\end{proof}

Note that the family that consists only of the pairs $(T,\{T\})$ for $T\in\Tscr$ 
does not always determine the happy nucleolus,
as the example $P=\{1,2\}$, $\Tscr=\{\{1\},\{1,2\}\}$ with $c(T)=1$ for all $T\in\Tscr$ shows.

\subsection{Computing the happy nucleolus for set covering} \label{section:polytimecomputation}

\cref{thm:few_constraints_determine_nucleolus} reduces the effort in the MPS scheme
because fewer coalitions need to be considered, and only coalitions for which computing the excess with respect to some given $y \in \mathbb{R}^P$ is trivial:
$\Cscr^{**}\subseteq\Cscr^*\subseteq\Cscr^{\textnormal{simple}}$. 
More precisely, we can prove:

\polytimecorollary*

\begin{proof}
    By \cref{thm:few_constraints_determine_nucleolus}, we can apply the MPS scheme to $\Cscr^*$ to determine the happy nucleolus. 
    It remains to show that we can implement the MPS scheme in polynomial time. 
    In each iteration of the MPS scheme, we solve the following linear program:
    \begin{equation}\label{eq:maschler_scheme_lp}
    	\begin{array}{lrcll}
    		\max \ \varepsilon \\[1mm]
    		\text{subject to } & y(P) & = & \LP(P,\Tscr,c) \\
    		& y(S)  & \le & c(T) - \varepsilon& \forall (S, \{T\}) \in \Cscr^* \setminus \Fscr \\
    		& y(S) & = & c(T) - \varepsilon_{(S,\{T\})} & \forall (S, \{T\}) \in \Fscr \\
    		& y &\ge & 0 \enspace,
    	\end{array}
    \end{equation}
    where $\Fscr$ contains all pairs $(S,\{T\}) \in \Cscr^*$ whose excess values have already been fixed in earlier iterations. 
    For such a pair $(S,\{T\}) \in \Fscr$, $\varepsilon_{(S,\{T\})}$ denotes its fixed excess. 
    Let $\varepsilon^*$ denote the optimum value of \eqref{eq:maschler_scheme_lp}. 
    The main difficulty is to determine the set of pairs in $\Cscr^* \setminus \Fscr$ whose excess is equal to $\varepsilon^*$ for every optimum LP solution.
    More precisely, we need to find at least one element of this set. 
    For this, we consider the dual linear program:
    \begin{equation}\label{eq:maschlers_scheme_dual_lp}
    	\begin{array}{lrcll}
    	\multicolumn{5}{l}{	\min \ \displaystyle \sum_{(S,\{T\}) \in \Cscr^*} c(T) \cdot z_{(S,\{T\})} \ - \sum_{(S,\{T\}) \in \Fscr} \varepsilon_{(S,\{T\})} \cdot z_{(S,\{T\})} \ + \ \delta \cdot \LP(P,\Tscr,c) } \\[9mm]
    		\text{subject to } & \displaystyle \sum_{(S,\{T\}) \in \Cscr^*\setminus\Fscr} z_{(S,\{T\})}  & \ge & 1 \\[7mm]
    		& \delta + \displaystyle \sum_{(S,\{T\}) \in \Cscr^*: p \in S} z_{(S,\{T\})}  & \ge & 0 & \forall p \in P \\[7mm]
    		& z_{(S,\{T\})} &\ge & 0 & \forall (S, \{T\}) \in \Cscr^* \setminus \Fscr \enspace.
    	\end{array}
    \end{equation}
    
    By a well-known result by \'{E}va Tardos \cite{tardos1986lp}, we can solve  \eqref{eq:maschler_scheme_lp} and \eqref{eq:maschlers_scheme_dual_lp} in time  polynomial in the number of rows and columns of the constraint matrix because  all entries in the constraint matrix are either $0$ or $1$.
    Hence, since $|\Cscr^*| \le |P| \cdot |\Tscr|$, we can solve \eqref{eq:maschlers_scheme_dual_lp} in time bounded by a polynomial in $|P|$ and $|\Tscr|$.
    
    Let $(z^*, \delta^*)$ be an optimum solution to \eqref{eq:maschlers_scheme_dual_lp}.
    By complementary slackness, for each $(S,\{T\}) \in \Cscr^*\setminus\Fscr$ with $z^*_{(S,\{T\})} > 0$ and each optimum solution $(y^*, \varepsilon^*)$ to \eqref{eq:maschler_scheme_lp}, the excess of $(S,\{T\})$ with respect to $y^*$ is exactly $\varepsilon^*$. 
    Hence, we can add all $(S,\{T\}) \in \Cscr^*\setminus\Fscr$ with $z^*_{(S,\{T\})} > 0$ to $\Fscr$ and set $\varepsilon_{(S,\{T\})}$ to $\varepsilon^*$. 
    Note that by the first inequality in \eqref{eq:maschlers_scheme_dual_lp}, there must be at least one such  $(S,\{T\}) \in \Cscr^*\setminus\Fscr$. 
    Hence, the total number of iterations of the MPS scheme is bounded by $|\Cscr^*|$.
\end{proof}

\section{Minimality of the family determining the happy nucleolus} \label{section:lowerbound}

In this section, we would like to show that \cref{thm:few_constraints_determine_nucleolus} is, in a certain sense, best possible.
To this end, we will first prove minimality of $\Cscr^*$ under some conditions (cf.\ \cref{thm:minimality}),
and then we discuss why these conditions are both reasonable and necessary.

\subsection{Minimality of \bm{$\Cscr^*$}}

For many set systems, $\Cscr^*$ is the unique minimal subfamily
$\Fscr$ of $\Cscr^{\textnormal{simple}}$ such that $\Fscr$ and every super-family of $\Fscr$
determines the happy nucleolus for all cost functions (see \cref{section:discussstability} for a discussion). 
To make precise what we mean by ``many set systems'', 
we say that a set system $\Tscr\subseteq 2^P$ satisfies the \emph{complement condition} if
for every $T \in \mathcal{T}$ there exists a cover $\Uscr \subseteq \mathcal{T} \cap 2^{P \setminus T} $ of its complement $P \setminus T$.
Note that the complement condition is satisfied naturally in many applications (e.g., when $\Tscr$ contains all singletons), including vehicle routing.
We refer to \cref{section:discussiononminimality} for further discussion.

\begin{theorem} \label{thm:minimality}
Let $P$ be a finite set and $\Tscr\subseteq 2^P$ a set system that covers $P$ and satisfies the complement condition.
Then, for every $(S^*,\Uscr^*)\in\Cscr^*$, there exists a cost function $c:\Tscr\to\mathbb{R}_{\ge 0}$ such that $\Cscr^{\textnormal{simple}}\setminus\{(S^*,\Uscr^*)\}$ does not determine the happy nucleolus.
\end{theorem}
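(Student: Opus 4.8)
The plan is to prove the statement by producing, for the given pair $(S^*,\{T^*\})$, an explicit cost function $c$ for which the happy nucleolus $y^*$ fails to be the lexicographic maximizer of the reduced excess vector. Writing $\Cscr':=\Cscr^{\textnormal{simple}}\setminus\{(S^*,\{T^*\})\}$ and $K:=\{y\in\mathbb{R}_{\ge 0}^P : y(P)=\LP(P,\Tscr,c)\}$, it suffices to exhibit some $y'\in K$ with $y'\ne y^*$ and $\exc^{y'}[\Cscr']\ge_{\textnormal{lex}}\exc^{y^*}[\Cscr']$: determination requires $y^*$ to be the \emph{unique} maximizer, so any such $y'$ (whether it ties or strictly beats $y^*$) refutes it. I will take $y'=y^*+\epsilon d$ for a direction $d$ that, in the full simple family, is blocked only by the deleted pair.

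\emph{Construction.} By the complement condition, fix a cover $\Wscr\subseteq\Tscr\cap 2^{P\setminus T^*}$ of $P\setminus T^*$. Set $c(T^*)>0$ and $c(W)>0$ for $W\in\Wscr$, and $c(T)=M$ for every other $T\in\Tscr$, with $M$ large enough that no minimum-cost cover of any coalition uses an expensive set. Then the cooperative game $\bar c$ equals that of the instance $(P,\{T^*\}\cup\Wscr,c)$, which is the disjoint union of the single-set instance on $T^*$ and the instance on $P\setminus T^*$ with set system $\Wscr$. By the independence property of the happy nucleolus, $y^*$ is the concatenation of the two block nucleoli. On the single-set block all players are symmetric, so $y^*_p=\lambda:=c(T^*)/|T^*|>0$ for every $p\in T^*$; consequently $(T^*,\{T^*\})$ is tight (excess $0$) and each $(S,\{T^*\})$ with $S\subsetneq T^*$ has excess $(|T^*|-|S|)\lambda$, so all $|T^*|$ pairs $(T^*\setminus\{p\},\{T^*\})$ lie at excess exactly $\lambda$. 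A short LP-optimality argument shows the complement block has minimum excess $0$, attained by a tight simple pair $(W_0,\{W_0\})$ with $W_0\in\Wscr$.

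\emph{The two perturbations.} If $S^*=T^*\setminus\{p^*\}$ (so $|T^*|\ge 2$), pick $q\in T^*\setminus\{p^*\}$ and set $d=e_q-e_{p^*}$ (unit vectors), supported on $T^*$ with $d(T^*)=0$. This fixes every complement pair and $(T^*,\{T^*\})$, hence all entries below level $\lambda$; among the pairs whose excess decreases — the $(S,\{T^*\})$ with $q\in S\subseteq T^*\setminus\{p^*\}$ — the deleted $S^*$ uniquely attains the smallest excess $\lambda$, the rest being $\ge 2\lambda$, while $(T^*\setminus\{q\},\{T^*\})$ rises; so for small $\epsilon>0$, $\exc^{y^*+\epsilon d}[\Cscr']$ first differs from $\exc^{y^*}[\Cscr']$ by a strictly larger entry. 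If instead $S^*=T^*$ (so $T^*\ne P$), delete the tight pair $(T^*,\{T^*\})$; the minimum excess over $\Cscr'$ is still $0$, witnessed by tight complement pairs. Let $d$ raise $y$ uniformly on $T^*$ by a total $\eta$ and lower it, uniformly, by $\eta$ on the positively-valued complement players (keeping zero players fixed), so $d(P)=0$. Every complement pair strictly gains excess (a tight one has value $c(W)>0$, so meets a decreased positive player), while each $(S,\{T^*\})$, $S\subsetneq T^*$, loses only $O(\epsilon)$ and stays near $\lambda>0$; hence the minimum excess over $\Cscr'$ rises strictly above $0$. In both cases $y^*+\epsilon d\in K\setminus\{y^*\}$ beats $y^*$ on $\Cscr'$.

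\emph{Main obstacle.} The delicate points are, first, pinning down the structure of $y^*$: this is where the large costs $M$ (reducing the happy nucleolus to that of $\bar c$), the symmetry of the $T^*$-block, and the independence property across the decomposition must be combined, and where the complement condition is genuinely used — it is exactly what lets $P\setminus T^*$ be covered disjointly from $T^*$, so the instance decouples and $y^*$ is uniform on $T^*$. Second, since $\Cscr^{\textnormal{simple}}$ contains exponentially many pairs $(S,\{T^*\})$, one must check carefully that $d$ lowers no surviving simple pair at or below the critical excess level; the key quantitative facts are that the deleted pair uniquely minimizes excess among the decreasing pairs (second case) and that a uniform $T^*$-versus-complement exchange simultaneously lifts all tight complement pairs while only mildly disturbing the $T^*$-pairs (first case).
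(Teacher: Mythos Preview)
Your proof is correct and follows the same two-case split as the paper (Case~(a): $S^*=T^*\setminus\{p^*\}$; Case~(b): $S^*=T^*$), but the execution differs. The paper chooses very specific costs ($c(T^*)=k$, complement sets cost $0$ or $\tfrac{1}{n}$, all others $k+2$) and exhibits an explicit competitor $y'$ far from $y^*$, then checks a short table of excess values. You instead take generic positive costs on $\{T^*\}\cup\Wscr$, push all other sets to cost $M$, invoke the independence property and symmetry to identify $y^*$ as uniform on $T^*$, and then argue via an infinitesimal perturbation $y^*+\epsilon d$. Your route is more conceptual (it makes transparent that the deleted pair is precisely the one blocking the direction $d$), at the price of importing the independence-on-disjoint-subinstances property as a black box; the paper's route is entirely self-contained but more ad hoc.

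Two small imprecisions that do not affect correctness: in Case~(b) you write ``every complement pair strictly gains excess,'' but a complement pair $(S,\{W\})$ with $S\cap Q=\emptyset$ is unchanged; what you actually need, and what your parenthetical argument shows, is that every \emph{tight} complement pair strictly gains (since $y^*(S)=c(W)>0$ forces $S\cap Q\ne\emptyset$), while non-tight ones already sit at positive excess. Likewise, ``stays near $\lambda$'' for the $T^*$-subpairs should read ``stays above $\lambda-\epsilon>0$,'' which is all that is required for the minimum excess over $\Cscr'$ to rise strictly above $0$.
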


\begin{proof}
    Let $n=|P|$ and $\Tscr$ as in the theorem and $(S^*,\Uscr^*)\in \Cscr^*$. By definition of $\Cscr^*$, we have $\Uscr^* = \{T^*\}$ for some $T^* \in \Tscr$ and will denote $k=|T^*|$. The coalition $S^*$ must look like one of the following:
    \begin{enumerate}
	   \item[(a)] $S^* = T^* \setminus \{p^*\}$ for some $p^* \in T^*$; or
	   \item[(b)] $S^* = T^*$.
	\end{enumerate}
We consider these cases separately.

\smallskip\noindent\textbf{(a):}
Then $k\ge 2$. We define the cost function
        \begin{equation*}
            c(T) = \begin{cases}
                k &\text{for } T = T^*  \\
                0 &\text{for } T \in \Tscr \cap 2^{P \setminus T^*} \\
                k+2 &\text{otherwise}.
            \end{cases}
        \end{equation*}

        By the complement condition, we can cover $P \setminus T^*$ with cost 0. 
        Thus, every cost allocation with nonnegative excess vector must be 0 on $P \setminus T^*$. 
        The happy nucleolus $y^*$ now distributes the cost of $T^*$ equally among its members, as this maximizes the minimal nonzero excess 
        (which is attained at each coalition $T^* \setminus \{p\}$ for some $p \in T^*$). 
        Hence, $y^*_{p} = 1$ for every $p \in T^*$. 
        Lexicographically maximizing $\exc^y[\Cscr^{\textnormal{simple}}\setminus\{(S^*,\Uscr^*)\}]$ yields a different optimum solution. 
        Consider $y'$ with $y'_{p^*}=0$ and $y'_{p}=\frac{k}{k-1}$ for every $ p \in T^*\setminus\{p^*\}$. 
        Then $\exc^{y'}[\Cscr^{\textnormal{simple}}\setminus\{(S^*,\Uscr^*)\}]$ is lexicographically larger than $\exc^{y^*}[\Cscr^{\textnormal{simple}}\setminus\{(S^*,\Uscr^*)\}]$:
        
        \begin{center}
        { \doublespacing \scriptsize
        \begin{tabular}{|l!{\vrule width 1.1pt}c|c|c!{\vrule width 1.1pt}c|}
            \hline
            \rowcolor{gray!10} \textbf{Excess of \bm{$S$} for...} & \bm{$S\cap T^*\in\{\emptyset,T^*\}$} & \bm{$S\cap T^*=T^*\setminus\{p^*\}$} & \bm{$S\cap T^*=T^*\setminus\{p\}, p \neq p^*$}  & \textbf{Other coalitions} \\ \noalign{\hrule height 1.1pt}
            \cellcolor{gray!10} \bm{$y^*$} \textbf{(happy nucl.)}& 0 & 1 & 1 &  $\geq 2$\\ \hline
            \cellcolor{gray!10} \bm{$y'$} & 0 & \cellcolor{red!20} $0$ & \cellcolor{green!20} $\frac{k}{k-1} > 1$ &  $\geq \frac{k}{k-1}$ \\ \hline
        \end{tabular}
        }   
        \end{center}
        
\smallskip\noindent\textbf{(b):}
By definition of $\Cscr^*$, we know that $T^* \neq P$ in this case. Choose an arbitrary player $p^* \in P \setminus T^*$. 
        We define the cost function
        \begin{equation*}
            c(T) = \begin{cases}
                k &\text{for } T = T^* \\
                \frac{1}{n} &\text{for } p^* \in T \in \Tscr \cap 2^{P \setminus T^*} \\
                0 &\text{for } p^* \notin T \in \Tscr \cap 2^{P \setminus T^*} \\
                k+2 &\text{otherwise} \enspace.
            \end{cases}
        \end{equation*}

    By the complement condition, we know that the minimum cost $x$ to cover $P \setminus T^*$ fulfills $\frac{1}{n} \leq x < 1$. 
    By complementary slackness (see the LP duality equation in the proof of \cref{prop:lp_allocated}),
    the happy nucleolus must have excess 0 on all sets of an optimum fractional set covering solution. 
    This applies to $T^*$ and at least one other set $T'$ with  $p^* \in T' \in \Tscr \cap 2^{P \setminus T^*}$.
    Thus, the happy nucleolus $y^*$ must fulfill $\theta^{y^*}(T^*)=0$ and $\theta^{y^*}(T')=0$.
    Lexicographically maximizing $\exc^y[\Cscr^{\textnormal{simple}}\setminus\{(S^*,\Uscr^*)\}]$ yields a different optimum solution. 
    Consider $y'$ with $y'_{p}=0$ for every $p \in P \setminus T^*$ and $y'_{p}=1 + \frac{x}{k}$ for every $p \in T^*$.
    The existence of $T'$ shows that $\exc^{y'}[\Cscr^{\textnormal{simple}}\setminus\{(S^*,\Uscr^*)\}]$ is lexicographically larger than $\exc^{y^*}[\Cscr^{\textnormal{simple}}\setminus\{(S^*,\Uscr^*)\}]$:

    \begin{center}
        { \doublespacing \scriptsize
        \begin{tabular}{|l!{\vrule width 1.1pt}c|c|c!{\vrule width 1.1pt}c|}
            \hline
            \rowcolor{gray!10} \textbf{Excess of \bm{$S$} for...} & \bm{$S=T^*$} & \bm{$p^* \notin S \subseteq P \setminus T^*$} & \bm{$S=T'$} & \textbf{Other coalitions} \\ \noalign{\hrule height 1.1pt}
            \cellcolor{gray!10} \bm{$y^*$} \textbf{(happy nucl.)}& 0 & 0 & 0 & $\geq 0$\\ \hline
            \cellcolor{gray!10} \bm{$y'$} & \cellcolor{red!20} $-x$ & 0 & \cellcolor{green!20} $\frac{1}{n} > 0$ & $\geq \frac{1}{n}$ \\ \hline
        \end{tabular}
        }   
        \end{center}
    
    \qedhere
\end{proof}

In the remainder of this section, we discuss the statement of \cref{thm:minimality} in more detail.

\subsection{Stability of \bm{$\Cscr^*$}\label{section:discussstability}}

\begin{figure}[h]
    \begin{center}
        \begin{tikzpicture}[scale=1.5, thick]
            \node[circle, fill, draw, scale = 0.5, label=above:{\scriptsize $p_1$}] (1) at (1,0) {}; 
            \node[circle, fill, draw, scale = 0.5, label=above:{\scriptsize $p_2$}] (2) at (2,0) {};
            \node[circle, fill, draw, scale = 0.5, label=above:{\scriptsize $p_3$}] (3) at (3,0) {};
    
            \draw[color=blue] (2,0.09) ellipse (40pt and 15pt) node[fill=white, above=15pt] {3};
        \end{tikzpicture}
    \end{center}
    \caption{Instance with three players and only one set, covering them all. Equally distributing $y_p =1$ for all players $p$ yields the happy nucleolus. 
    It is enough to consider singleton coalitions to determine it. But adding a 2-element coalition (and not all 2-element coalitions) will yield a different solution.
	\label{fig:stability}}
\end{figure}
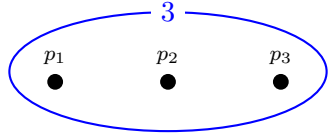

\cref{thm:minimality} shows that removing an arbitrary element of $\Cscr^*$ from $\Cscr^{\textnormal{simple}}$ 
can lead to a different solution than the happy nucleolus when lexicographically maximizing the resulting excess vector. 
Nevertheless, removing even more elements might again yield the happy nucleolus (see \cref{fig:stability}). 
We are not interested in these smaller subfamilies that lead to the happy nucleolus coincidentally because they lack a natural stability property:

$\Cscr^*$ is stable under addition of pairs by \cref{thm:few_constraints_determine_nucleolus}: any superset determines the happy nucleolus. 
No other subfamily of $\Cscr^{\textnormal{simple}}$ can have this stability property for all cost functions by \cref{thm:minimality}.

\subsection{Discussion of the complement condition}\label{section:discussiononminimality}

Now we explain the complement condition, which requires that
for every $T \in \mathcal{T}$ there exists a cover $\Uscr \subseteq \mathcal{T} \cap 2^{P \setminus T}$ of its complement $P \setminus T$.

The complement condition cannot simply be omitted as requirement in \cref{thm:minimality}: 
for a general set system $\Tscr$, we cannot always find a cost function $c$ for which the happy nucleolus needs all pairs in $\Cscr^*$
(even when restricting to $\Cscr^{\textnormal{simple}}$). This is demonstrated by the following example.

\begin{example} \label{example:triangle}
Consider the triangle on $P=\{p_1,p_2,p_3\}$, 
i.e., $\Tscr$ contains all two-element coalitions (\cref{fig:triangle}).
Note that the complement condition does not hold. 

\begin{figure}[h]
    \begin{center}
        \begin{tikzpicture}[xscale=1.5, yscale=1.3, thick]
            \node[circle, fill, draw, scale = 0.5, label={[yshift=-0.05cm]{\scriptsize $p_1$}}] (1) at (1,0) {}; 
            \node[circle, fill, draw, scale = 0.5, label={[yshift=-0.05cm]{\scriptsize $p_2$}}] (2) at (3,0) {};
            \node[circle, fill, draw, scale = 0.5, label={[yshift=-0.05cm]{\scriptsize $p_3$}}] (k) at (2,1.73) {};
    
            \draw[color=blue] (2,0) ellipse (40pt and 15pt) node[fill=white, below=10pt] {$T_1$};
    
            \draw[color=green!50!black, rotate around={60:(1,0)}] (2,0) ellipse (40pt and 15pt) node[fill=white, left=15pt, above=2pt] {$T_2$};
    
            \draw[color=red!80!black, rotate around={-60:(3,0)}] (2,0) ellipse (40pt and 15pt) node[fill=white, right=15pt, above=2pt] {$T_3$};

            rotate around={45:(5,5)}
    
        \end{tikzpicture}
    \end{center}
    \caption{Triangle instance of \cref{example:triangle}. 
    For any cost function $c:\{T_1,T_2,T_3\}\to\mathbb{R}_{\ge 0}$, there is only one cost allocation $y\in\mathbb{R}^P_{\ge 0}$ 
    with $y(P)=\LP(P,\Tscr,c)$ that makes the three coalitions $T_1,T_2,T_3$ happy.
	\label{fig:triangle}}
\end{figure}
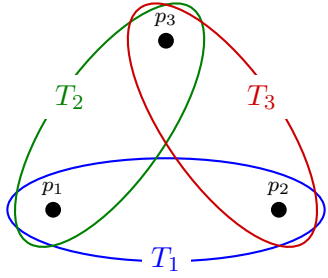

We can assume that $c(T_1) \leq c(T_2) \leq c(T_3)$. If $c(T_3) < c(T_1) + c(T_2)$, the unique optimum fractional cover takes each set with value $\frac12$. Otherwise, $T_1$ and $T_2$ constitute an optimum (fractional) cover. 
Thus, we can immediately identify the unique vector $y^*$ with nonnegative excess for each coalition and $y^*(P)=\LP(P,\Tscr,c)$, which must be the happy nucleolus:
\begin{align*}
    y^*_{p_1} &\ = \ \max \left\{0 \enspace , \enspace \frac{c(T_1) + c(T_2) - c(T_3)}{2} \right\} \enspace, \\[1mm]
    y^*_{p_2} &\ = \ c(T_1) - y^*_{p_1} \enspace, \\[1mm]
    y^*_{p_3} &\ = \ c(T_2) - y^*_{p_1} \enspace.
\end{align*}

Note that $y^*$ is also the unique vector with $y^*(P)=\LP(P,\Tscr,c)$ and nonnegative excess for each pair $(T,\{T\})$ with $T \in \Tscr$. 
Hence, every $\bar\Cscr \supseteq \left\{ (T,\{T\}) : T \in \Tscr \right\}$ determines the happy nucleolus,
and the pairs $(\{p_i\}, \{T_j\}) \in \Cscr^*$ are not needed. Hence \cref{thm:minimality} does not apply.  
\end{example}

Moreover, we remark that the complement condition holds naturally in many applications, including vehicle routing.
Even more, for an arbitrary set covering instance $(P,\Tscr, c)$, we can always define an equivalent 
instance $(P,\Tscr',c')$ that fulfills the complement condition by adding all singleton sets with their minimum covering cost.
Neither the set covering instance nor its happy nucleolus change by this polynomial-time transformation.

\section{Lower bound for families determining the (happy) nucleolus a posteriori \label{section:aposteriori_lowerbound}}

Reijnierse and Potters \cite{reijnierse1998b} showed that \emph{a posteriori} $2|P|-2$ coalitions suffice to determine the nucleolus.
They presented an example where a family of $2|P|-2$ coalitions determines the nucleolus and no element can be removed from
this family without destroying this property. This however does not show that no smaller family determines the nucleolus.
We present a different tight example (which is a set covering instance with nonempty core) 
in which no family with fewer than $2|P|-2$ coalitions does the job:

\begin{theorem} \label{thm:aposteriorilowerbound}
For every $n \in \mathbb{N}$, there is a set covering instance with $n$ players and nonempty core such that 
no family $\Cscr'\subseteq\Cscr$ with fewer than $2n-2$ elements determines the (happy) nucleolus.
\end{theorem}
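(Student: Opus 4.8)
The plan is to prove the theorem by an explicit construction together with a lower-bound counting argument. Since the instance is required to have nonempty core, the happy nucleolus coincides with the ordinary nucleolus, so all of the determining-set machinery (in particular \cref{lem:subsystem}) applies directly. First I would exhibit a concrete instance: a ``chain'' set-cover instance on $P=\{1,\dots,n\}$ whose induced cost game $\bar c$ is engineered so that (i) some integral optimum cover is also a fractional optimum (hence, by \cref{prop:lp_allocated}, the core is nonempty), (ii) the nucleolus $y^*$ is strictly positive, so that no nonnegativity constraint $y_p\ge 0$ is tight at $y^*$, and (iii) the excess values $\exc^{y^*}(S)$ take exactly $n-1$ distinct positive levels $\alpha_1<\cdots<\alpha_{n-1}$, where the coalitions realizing the $j$-th level form a \emph{pair} $\{L_j,R_j\}$ of nested/co-nested intervals whose incidence vectors $\mathbf 1_{L_j},\mathbf 1_{R_j}$ raise the rank of the accumulated tight system by exactly one. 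Verifying (i)--(iii) for the chosen cost function is a routine but careful computation that I would carry out before the main argument.

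Next I would record the easy direction to frame the bound: the $2n-2$ designated coalitions $\{L_j,R_j\}_{j=1}^{n-1}$ do determine $y^*$ (their incidence vectors together with $\mathbf 1_P$ span $\mathbb{R}^P$, and by \cref{lem:subsystem} every other constraint is dominated), so the bound is tight. The substance is the lower bound. Suppose $\Cscr'\subseteq\Cscr$ determines the nucleolus. Applying the converse reasoning behind \cref{lem:subsystem} — equivalently, Kohlberg's balancedness criterion \cite{kohlberg1971nucleolus} restricted to $\Cscr'$ — I would extract from $\Cscr'$ a sequence of excess levels $L_1,\dots,L_r$ (grouping the pairs of $\Cscr'$ by their excess at $y^*$) satisfying two requirements simultaneously: a \textbf{rank requirement}, namely that the incidence vectors of all tight constraints together with $\mathbf 1_P$ must have rank $n$ (otherwise a perturbation $y^*\pm\epsilon d$ along a vector $d$ orthogonal to all of them stays in $K$ and leaves $\exc^y[\Cscr']$ unchanged, contradicting uniqueness, here using $y^*>0$); and a \textbf{balancing requirement}, namely that at each active level the accumulated coalitions admit a nonnegative balancing certificate, so that the minimum $\Cscr'$-excess cannot be raised.

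The counting then proceeds as follows. The rank must climb from $1$ (the row $\mathbf 1_P$ coming from the equality $y(P)=\LP(P,\Tscr,c)$) up to $n$, i.e.\ there are $n-1$ rank increments. The heart of the argument, and the \textbf{main obstacle}, is to show that in the constructed instance each rank increment forces at least two coalitions of $\Cscr'$: whenever a new coalition $S$ first increases the rank at some level, the balancing requirement at that level cannot be met by adding $S$ alone, because in this instance $S$ can participate in a nonnegative balanced combination only together with its designated partner interval; thus both $L_j$ and $R_j$ (or equivalents of equal excess) must lie in $\Cscr'$. Granting this ``balancedness forces pairs'' lemma, the $n-1$ increments cost at least $2(n-1)=2n-2$ coalitions, giving $|\Cscr'|\ge 2n-2$. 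I emphasize that this must be a global counting argument rather than an element-wise necessity argument in the style of \cref{thm:minimality}, since ``determines the happy nucleolus'' is not monotone (smaller families can coincidentally determine it); the rank-plus-balancing bound sidesteps this because it constrains \emph{every} determining family at once. The delicate point to get right is the geometry of the chain instance ensuring that the only low-excess coalitions available to certify each level genuinely come in irreducible pairs, which is precisely what distinguishes this tight example from the symmetric instances where far fewer than $2n-2$ coalitions suffice.
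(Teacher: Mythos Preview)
Your chain construction matches the paper's: $\Tscr=\{\{p_1,\dots,p_i\}:1\le i\le n\}$ with $c(\{p_1,\dots,p_i\})=i$, and the nucleolus is $y_i=1-2^{-i}$ for $i<n$, $y_n=2-2^{-(n-1)}$.

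The lower-bound arguments, however, diverge. The paper does not invoke Kohlberg or count rank increments. Instead, for each $i=1,\dots,n-1$ it perturbs along $\pm\epsilon(\chi^i-\chi^{i+1})$; since $\Cscr'$ determines the nucleolus, each direction must strictly worsen $\exc^y[\Cscr']$, which forces $\Cscr'$ to contain some $A_i\ni p_i$, $p_{i+1}\notin A_i$ and some $B_i\ni p_{i+1}$, $p_i\notin B_i$ of minimum excess among such sets, necessarily with equal excess. Equal excess means $y(A_i)-y(B_i)\in\mathbb{Z}$, and the paper then exploits a purely arithmetic property of this particular $y$: whenever $y(A)-y(B)\in\mathbb{Z}$ for distinct $A,B$, one has $\{A\setminus B,B\setminus A\}=\{\{p_j\},\{p_{j+1},\dots,p_n\}\}$ for some $j$. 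Hence $A_i,B_i$ agree on $\{p_1,\dots,p_{i-1}\}$ and disagree on $p_i$, so the $2n-2$ sets are pairwise distinct.

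Your route has a real gap precisely where you flag it. First, property (iii) as stated is unverified and stronger than needed: there is no a priori reason only two coalitions realize each excess level, and a determining family $\Cscr'$ need not contain the designated $L_j,R_j$ at all. Second, Kohlberg's criterion characterizes the nucleolus relative to the full $\Cscr$, not relative to an arbitrary determining subfamily; an analogue for $\Cscr'$ would have to be proved and is not immediate from \cref{lem:subsystem}. Third, even granting such an analogue, the balancing condition at a given level permits arbitrary-sign weights on coalitions from earlier levels, so a single new coalition can in principle be balanced against those; ruling this out in your instance is exactly where the binary-fraction structure of $y$ must enter, and establishing it would amount to the paper's integer-difference property by another name. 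The perturbation argument reaches the conclusion with considerably less scaffolding.
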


\begin{proof}
	Let $n \in \mathbb{N}$. 
	Let $P=\{p_1,\ldots,p_n\}$, and let $\Tscr = \{\{p_1,\dots,p_i\}: 1 \le i \le n\}$ with $c(\{p_1,\dots, p_i\}) = i$ for each $1 \le i \le n$. 
	See \cref{fig:chain} for an illustration. 
	
	Let $y \in \mathbb{R}_{\ge 0}^P$ be the happy nucleolus. We write $y_i = y_{p_i}$. Then,
	\begin{equation} \label{eq:happynucleolus_chainexample}
		\begin{aligned}
			y_i&\ = \ 1-2^{-i} \hspace{2cm} \text{ for } i=1,\ldots,n-1 \enspace,\\[-1mm] 
			y_n&\ = \ 2-2^{-(n-1)} \enspace.
		\end{aligned}
	\end{equation}
	
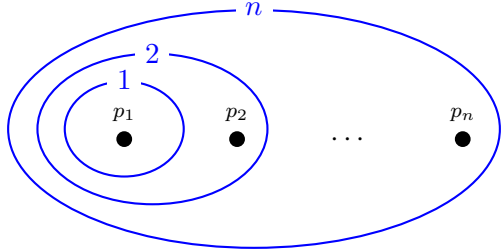
\begin{figure}[ht]
\begin{center}
    \begin{tikzpicture}[scale=1.5, thick]
        \node[circle, fill, draw, scale = 0.5, label=above:{\scriptsize $p_1$}] (1) at (1,0) {}; 
        \node[circle, fill, draw, scale = 0.5, label=above:{\scriptsize $p_2$}] (2) at (2,0) {};
        \node [] (dots1) at (3,0) {$\cdots$};
        \node[circle, fill, draw, scale = 0.5, label=above:{\scriptsize $p_n$}] (j) at (4,0) {};

        \draw[color=blue] (1,0.09) ellipse (15pt and 12pt) node[fill=white, above=11pt] {1};
        \draw[color=blue] (1.25,0.09) ellipse (29pt and 19pt) node[fill=white, above=21pt] {2};
        \draw[color=blue] (2.15,0.09) ellipse (62pt and 30pt) node[fill=white, above=39pt] {$n$};
    \end{tikzpicture}
\end{center}
\caption{\small Illustration of the set covering instance in the proof of Theorem~\ref{thm:aposteriorilowerbound},
with players $p_1,\dots, p_n$. For each $i=1,\dots, n$, we have a set $\{p_1, \dots, p_i\}$ of cost $i$.
\label{fig:chain}}
\end{figure}

This can be obtained by the MPS scheme (cf.\ \cref{fig:chain_maschler}):
The excess is of course 0 for the entire set~$P$. 
The next smallest excess cannot be larger than $\frac{1}{2}$, and this value is attained by the coalitions $\{p_1\}$ and $P\setminus\{p_1\}$; 
this determines $y_1=\frac{1}{2}$. 
With $y_1$ fixed, the MPS scheme maximizes the next smallest excess to be $\frac{3}{4}$, 
attained by the coalitions $\{p_1,p_2\}$ and $P\setminus\{p_2\}$; this determines $y_2=\frac{3}{4}$.
With $y_1$ and $y_2$ fixed, the MPS scheme maximizes the next smallest excess to be $\frac{7}{8}$, 
attained by the coalitions $\{p_1,p_2,p_3\}$ and $P\setminus\{p_3\}$; this determines $y_3=\frac{7}{8}$.
The MPS scheme continues in this manner until maximizing the excess of the coalitions $\{p_1,p_2,p_3,\ldots,p_{n-1}\}$ and $P\setminus\{p_{n-1}\}$ 
to be  $1-2^{-(n-1)}$; this determines $y_{n-1}=1-2^{-(n-1)}$ (and hence also determines $y_n$).

\begin{figure}[h]

\begin{center}
\scalebox{.95}
{
\scriptsize
\doublespacing
\begin{tabular}{|l!{\vrule width 1.1pt}c|c!{\vrule width 1.1pt}c|c!{\vrule width 1.1pt}c|c!{\vrule width 1.1pt}c!{\vrule width 1.1pt}c|c|}
    	\hline 
    	&
	\multicolumn{2}{c!{\vrule width 1.1pt}}{\cellcolor{blue!20}\textbf{Stage 1}} & \multicolumn{2}{c!{\vrule width 1.1pt}}{\cellcolor{blue!15} \textbf{Stage 2}} & \multicolumn{2}{c!{\vrule width 1.1pt}}{\cellcolor{blue!10} \textbf{Stage 3}} & $\cdots$ & \multicolumn{2}{c|}{\cellcolor{blue!5}  \textbf{Stage} \bm{$n-1$}}\\ \hline
    	\textbf{Coalition} &
    	\cellcolor{blue!20} $\{p_1\}$ & \cellcolor{blue!20} $P\setminus \{p_1\}$ & \cellcolor{blue!15} $\{p_1, p_2\}$ & \cellcolor{blue!15} $P\setminus \{p_2\}$ & \cellcolor{blue!10} $\{p_1, p_2, p_3\}$ & \cellcolor{blue!10} $P\setminus \{p_3\}$ & $\cdots$ & \cellcolor{blue!5} $\{p_1, \ldots , p_{n-1}\}$ & \cellcolor{blue!5} $P\setminus \{p_{n-1}\}$ \\ \hline
    	\textbf{Excess} &
    	\multicolumn{2}{c!{\vrule width 1.1pt}}{\cellcolor{blue!20} $\frac{1}{2}$} & \multicolumn{2}{c!{\vrule width 1.1pt}}{\cellcolor{blue!15} $\frac{3}{4}$} & \multicolumn{2}{c!{\vrule width 1.1pt}}{\cellcolor{blue!10} $\frac{7}{8}$} & $\cdots$ & \multicolumn{2}{c|}{\cellcolor{blue!5} $1-2^{-(n-1)}$}\\ \hline
\end{tabular}
}
\end{center}
\caption{\small Stages in the MPS scheme for the set covering instance in \cref{fig:chain}. \label{fig:chain_maschler}}
\end{figure}

	
	The happy nucleolus $y$ has the following property that we will exploit:
	\begin{equation} \label{eq:coaltionswithsameexcess}
	\begin{aligned}
	& \text{If $A$ and $B$ are two different coalitions such that $y(A)-y(B)$ is an integer,} \\[-1mm]
	& \text{then there is an index $j\in\{1,\ldots,n-1\}$ such that $\left\{\big. A\setminus B,\, B\setminus A \right\} = \left\{\big. \{p_j\},\, \{p_{j+1},\ldots,p_n\}\right\}$.}
	\end{aligned}
	\end{equation}
This follows easily from \eqref{eq:happynucleolus_chainexample}.

	Now let $\Cscr'\subseteq\Cscr$ such that $\Cscr'$ determines the happy nucleolus. We show $|\Cscr'|\ge 2n-2$.
	For $i=1,\ldots,n-1$, define vectors $y^{i+}=y+\epsilon(\chi^i-\chi^{i+1})$ and $y^{i-}=y-\epsilon(\chi^i-\chi^{i+1})$, 
	where $\chi^i$ denotes the $i$-th unit vector and $\epsilon$ is a sufficiently small positive constant. 
	Let $A_i\in\Cscr'$ be a set that contains $i$ but not $i+1$ and has smallest excess with respect to $y$ among such sets.
	Let $B_i\in\Cscr'$ be a set that contains $i+1$ but not $i$ and has smallest excess with respect to $y$ among such sets.
	Such sets must exist, and their excess with respect to $y$ must be the same, 
	because $y^{i+}$ and $y^{i-}$ both have a lexicographically smaller excess vector with respect to $\Cscr'$. 
	Since $A_i$ and $B_i$ have the same excess, $y(A_i)-y(B_i)$ is an integer. 
	By \eqref{eq:coaltionswithsameexcess}, $A_i$ and $B_i$ agree on players $\{p_1,\ldots,p_{i-1}\}$ but disagree on player $p_i$.
	Hence, the $2n-2$ sets $A_i,B_i$ ($i=1,\ldots,n-1$) in $\Cscr'$ are all distinct. 
\end{proof}

\bibliographystyle{plain}
\bibliography{refs}

\end{document}